\documentclass[11pt,reqno]{amsart}
\usepackage[T1]{fontenc}
\usepackage{lmodern}
\usepackage{amsmath,amssymb,amsthm,mathtools}
\usepackage[expansion=false]{microtype}
\usepackage{aliascnt}
\usepackage[hidelinks]{hyperref}
\usepackage[capitalize,nameinlink,noabbrev]{cleveref}
\numberwithin{equation}{section}
\newtheorem{theorem}{Theorem}[section]
\newaliascnt{proposition}{theorem}
\newtheorem{proposition}[proposition]{Proposition}
\aliascntresetthe{proposition}
\newaliascnt{lemma}{theorem}
\newtheorem{lemma}[lemma]{Lemma}
\aliascntresetthe{lemma}
\theoremstyle{remark}
\newaliascnt{remark}{theorem}
\newtheorem{remark}[remark]{Remark}
\aliascntresetthe{remark}
\newaliascnt{example}{theorem}
\newtheorem{example}[example]{Example}
\aliascntresetthe{example}
\crefname{theorem}{Theorem}{Theorems}
\crefname{proposition}{Proposition}{Propositions}
\crefname{lemma}{Lemma}{Lemmas}
\crefname{remark}{Remark}{Remarks}
\crefname{example}{Example}{Examples}
\Crefname{theorem}{Theorem}{Theorems}
\Crefname{proposition}{Proposition}{Propositions}
\Crefname{lemma}{Lemma}{Lemmas}
\Crefname{remark}{Remark}{Remarks}
\Crefname{example}{Example}{Examples}
\newcommand{\R}{\mathbb R}
\newcommand{\C}{\mathbb C}
\newcommand{\Sc}{\mathbb S}
\DeclareMathOperator{\sech}{sech}
\DeclareMathOperator{\diam}{diam}
\newcommand{\cc}{C_c^\infty}
\newcommand{\dd}{\,\mathrm d}

\usepackage{xcolor}

\makeatletter
\@namedef{subjclassname@2020}{2020 Mathematics Subject Classification}
\makeatother

\title{Concave entire solutions of the Liouville equation}
\author{Changfeng Gui}
\address{Department of Mathematics, University of Macau, Macau SAR, P. R. China}
\address{Zhuhai UM Science and Technology Research Institute, Hengqin, Guangdong 519031, P. R. China}
\email{changfeng.gui@um.edu.mo}
\author{Qinfeng Li}
\address{School of Mathematics, Hunan University, Changsha 410082, P. R. China}
\email{liqinfeng@hnu.edu.cn}
\author{Chunhua Wang}
\address{School of Mathematics and Statistics, and Key Lab NAA--MOE, Central China Normal University, Wuhan 430079, P. R. China}
\email{chunhuawang@ccnu.edu.cn}
\author{Lu Xu}
\address{School of Mathematics, Hunan University, Changsha 410082, P. R. China}
\email{xulu@hnu.edu.cn}
\subjclass[2020]{35B06, 35B35, 35J61}

\keywords{Liouville equation, concave entire solutions, one-dimensional symmetry, stability, dimension reduction}
\date{}

\begin{document}
	\begin{abstract}
			We prove that every concave entire solution of the Liouville equation $-\Delta u=e^{2u}$ in $\R^n$ is one-dimensional, and we extend the result to a class of general semilinear equations under suitable conditions on the nonlinearity. In dimension two, we also provide a PDE-based proof of the known classification of quasiconcave solutions to planar Liouville solutions, without relying on complex-analytic methods developed in the previous work by Bergweiler-Eremenko-Langley \cite{BEL}.
	\end{abstract}
	\maketitle
	
	\section{Introduction}
	The Liouville equation
	\begin{equation}\label{eq:Liouville}
		-\Delta u=e^{2u}\qquad\text{in }\R^n
	\end{equation}
	has a distinguished place among semilinear elliptic equations. In two
	dimensions it is the equation for a conformal metric of constant positive
	Gaussian curvature: if $g_u=e^{2u}|dz|^2$ on $\C$, then under this metric, the Gaussian curvature reads $K_{g_u}=-e^{-2u}\Delta u=1$. The same equation in higher dimensions is
	not the scalar-curvature equation for a conformal metric, but is a natural
	entire-space elliptic equation with an exponential source. A basic issue in
	both settings is to understand which global geometric restrictions on $u$
	force an otherwise flexible family of entire solutions to become rigid.
	Our concern is the particularly strong restriction of \emph{concavity},
	and, in the plane, the weaker condition of \emph{quasiconcavity}.
	
	\subsection{Classical classifications of planar Liouville solutions}
	The two-dimensional equation has a long history originating in Liouville's
	representation of constant-curvature metrics. In terms of a developing map,
	every smooth entire solution on $\C$ can be represented as
	\begin{equation}\label{eq:developing}
		e^{u(z)}=\frac{2|\Phi'(z)|}{1+|\Phi(z)|^2},
	\end{equation}
	where $\Phi:\C\to\widehat\C$ is a locally univalent meromorphic function;
	its poles, if any, are simple \cite{Liouville}. Thus the planar equation
	is linked to the spherical derivative of a meromorphic function and to
	the pullback of the round metric on $\Sc^2$. The representation is an
	extremely powerful starting point, but it does not by itself yield a
	classification: locally univalent meromorphic maps can have complicated
	behavior at infinity. One therefore seeks additional global conditions,
	such as finite conformal area, upper bounds, controlled growth, or
	geometric information about level sets and the induced metric.
	
	A classical landmark is the finite-area classification of Chen and Li
	\cite{CL}. If $u$ solves \eqref{eq:Liouville} in $\R^2$ and
	$\int_{\R^2}e^{2u}\dd x<\infty$, then, for some $\lambda>0$ and
	$x_0\in\R^2$,
	\begin{equation}\label{ql:eq:spherical}
		u(x)=\log\frac{2\lambda}{1+\lambda^2|x-x_0|^2}.
	\end{equation}
	These spherical solutions have total conformal area $4\pi$. The
	classification reflects the interaction between asymptotic information,
	the method of moving planes, and the underlying conformal geometry; see
	also Chou and Wan \cite{CW}. However, finite area is not a suitable
	hypothesis for every symmetry problem. For example,
	\begin{equation}\label{eq:standard-profile}
		u(s,t)=\log\sech s
	\end{equation}
	is an entire solution whose superlevel sets are parallel strips and
	whose conformal area in the plane is infinite.
	
	A substantial enlargement of the known families was obtained by
	Eremenko, Gui, Li and Xu \cite{EGLX}. They classified all planar
	solutions bounded from above and, more generally, investigated solutions
	with controlled logarithmic growth of the form
	$\limsup_{|z|\to\infty}u(z)/\log|z|<\infty$. Up to the natural changes
	of variables, the bounded-above solutions are either spherical or of
	the form
	\begin{equation}\label{eq:bounded-family}
		u_b(s,t)=\log\frac{2e^s}{1+b^2+2be^s\cos t+e^{2s}},
		\qquad b\ge0.
	\end{equation}
	For $b=0$ this is precisely the one-dimensional solution \eqref{eq:standard-profile}, while the
	solutions with $b>0$ depend genuinely on both variables. Cai, Lai and
	Zhang \cite{CLZ} subsequently obtained another proof of the bounded-above
	classification using a conformal-area growth estimate. A broader
	account of classifications under asymptotic and geometric hypotheses is
	provided by Cai and Lai \cite{CaiLaiSurvey}. Extensions of Liouville
	rigidity to surfaces under curvature or asymptotic assumptions appear in
	\cite{CaiLaiSurface,CFP,Ou,CFG}. A broader semilinear rigidity approach on manifolds, including
	an exponential equation in dimension two, is developed in
	\cite{CatinoMonticelli}.
	In particular, classification theorems based on logarithmic lower
	bounds at infinity concern a different asymptotic regime from
	\eqref{eq:standard-profile}, which decays linearly along its
	distinguished direction.
	
	Another source of rigidity is the intrinsic geometry of the metric
	$g_u$. Gui and Li \cite{GL} proved that
	$\diam_{g_u}(\R^2)\ge\pi$ and computed the diameter for
	\eqref{eq:bounded-family} as $\pi+2\arctan b$. They conjectured that the least diameter solutions must be either radial or one-dimensional. Bergweiler, Eremenko
	and Langley \cite{BEL} subsequently verified the least
	diameter rigidity conjecture and, more importantly, they also classified \emph{all} concave and quasiconcave planar solutions without
	assuming an upper bound or the attainment of a maximum. Here
	quasiconcavity means that every superlevel set $\{u\ge c\}$ is convex.
	Their classification shows that a quasiconcave planar solution is either
	spherical or one-dimensional; in particular, a concave solution must
	be one-dimensional.
	
	It is worth explaining the analytic content of \cite{BEL}, as it
	clarifies both the strength and the limitations of a different approach.
	After introducing the developing map $\Phi$ in
	\eqref{eq:developing}, their proof of the concavity theorem separates
	meromorphic maps according to their growth and employs the geometry of
	exceptional asymptotic values, decay in angular sectors, and estimates
	for positive harmonic functions. The stronger quasiconcavity theorem
	requires additional complex-analytic machinery. The Schwarzian
	derivative of $\Phi$ leads to an equation
	\begin{equation}\label{eq:schwarzian-ode}
		w''+A(z)w=0,
	\end{equation}
	with entire coefficient $A$. For polynomial $A$, the argument uses
	sectorial asymptotics and Stokes geometry; for transcendental $A$, it
	invokes Wiman--Valiron theory and asymptotic integration of the complex
	ODE to construct large violations of midpoint quasiconcavity. In fact,
	\cite{BEL} obtains a quantitative midpoint-defect conclusion in addition
	to the classification. These methods are finely adapted to the complex
	structure of the planar equation and yield information well beyond a
	bare symmetry statement. On the other hand, the dependence on a
	meromorphic developing map makes them difficult to transfer directly
	to higher dimensions or to a general nonlinearity $f(u)$.
	
	These results motivate a complementary PDE approach. We do not
	attempt to recover the full complex-analytic conclusions of
	\cite{BEL}, notably their quantitative midpoint-defect and metric
	diameter results. Our aim is instead to isolate mechanisms that are
	intrinsic to the differential equation: recession directions of convex
	sets, positive solutions of the linearized equation, spectral
	comparison, and, in two dimensions, a divergence-free tensor. This
	approach gives a short independent planar classification argument,
	but its main advantage is that the concavity method extends to every
	dimension and to nonlinearities without a developing-map
	representation.
	
	\subsection{The higher-dimensional concavity problem}
	For $n\ge3$, equation \eqref{eq:Liouville} no longer has the direct
	interpretation as a constant-Gaussian-curvature equation. It nevertheless
	has a background beyond planar conformal geometry. Exponential elliptic
	equations arise, for instance, in the classical theory of self-gravitating
	isothermal spheres, going back to Emden \cite{Emden}; see also \cite{Hunter}.
	They also occur naturally in the study of stability and Morse index for
	semilinear elliptic problems, where the analysis of entire solutions
	provides information relevant to corresponding problems on bounded
	domains \cite{Dancer2008,Farina,DF}.
	
	The motivation for the particular \emph{concavity} assumption is more
	specific, and comes from a striking connection with De Giorgi's
	conjecture for the Allen--Cahn equation. As observed in
	\cite{EGLX}, the standard one-dimensional Liouville profile
	\[
	h(t)=\log\sech t,\qquad -h''=e^{2h},
	\]
	has the remarkable property that, after a rescaling and a change of
	sign, its derivative is exactly the Allen--Cahn layer:
	\begin{equation}\label{eq:profile-connection}
		q(s)=-h'(s/\sqrt2)=\tanh(s/\sqrt2),
		\qquad q''=q^3-q.
	\end{equation}
	Moreover,
	\[
	q'(s)=-\frac1{\sqrt2}h''(s/\sqrt2)>0.
	\]
	Thus the strict monotonicity of the Allen--Cahn layer is precisely the
	strict concavity of the underlying Liouville profile, expressed at
	one higher order of differentiation.
	
	De Giorgi's conjecture \cite{DG} asks whether a bounded entire solution
	$v$ of $\Delta v=v^3-v$ that is monotone in one direction,
	$\partial_e v>0$, must be one-dimensional (at least in dimensions
	$n\le8$). The identity \eqref{eq:profile-connection} suggests a
	natural counterpart for the Liouville equation: if the sign condition
	on a \emph{first} directional derivative is replaced by a sign
	condition on the \emph{second} derivatives, does one again obtain
	one-dimensional symmetry? In several variables, the intrinsic
	formulation of this second-order sign condition is
	\[
	D^2u\le0,
	\]
	namely, concavity.  This leads to the question of whether \emph{every
		concave entire Liouville solution must depend on only one Euclidean
		variable}. The concavity hypothesis is thus suggested by the exact relation
	between the two one-dimensional profiles, rather than chosen ad hoc.
	This profile identity does not, however, transform general Liouville
	solutions into Allen--Cahn solutions.
	
	The dimension dependence makes this question especially interesting.
	The De Giorgi conjecture was proved in dimensions two and three by
	Ghoussoub--Gui \cite{GG} and Ambrosio--Cabr\'e \cite{AC}; Savin
	\cite{Savin} established it through dimension eight under prescribed
	limits at infinity, while del Pino--Kowalczyk--Wei \cite{DPKW}
	constructed counterexamples in dimensions at least nine. More
	recently, the works \cite{LLWWWW,CFFFS} have established the full
	monotone result in dimension four. By contrast, as we show here, the
	Liouville counterpart based on concavity holds in \emph{every}
	dimension. The analogy is thus the source of the question, not a
	prediction of the same dimension threshold.
	
	There is also a useful distinction from existing stability theory for
	exponential elliptic equations. Farina \cite{Farina} ruled out stable
	entire solutions of $-\Delta v=e^v$ when $2\le n\le9$, and
	Dancer--Farina \cite{DF} showed that there are no finite-Morse-index
	entire solutions when $3\le n\le9$; in dimension two, such solutions
	are necessarily radial. Stable radial entire solutions do occur for
	$n\ge10$ \cite{JL,Farina}. These facts neither imply nor contradict
	concavity rigidity. In particular, a nonconstant one-dimensional
	Liouville profile, when lifted to $\R^n$ for $n\ge2$, has infinite
	Morse index. Indeed, after finding a compactly supported negative
	test function as in \cref{lem:unstable}, one can place arbitrarily
	many disjoint translations of it in directions transverse to the
	profile. Their span is a negative subspace of arbitrarily large
	dimension. Consequently,
	finite-Morse-index classification does not settle the concavity
	question, and general stable-solution nonexistence is unavailable in
	high dimensions.
	
	The higher-dimensional problem was addressed by
	Eremenko--Gui--Li--Xu \cite[Theorem~1.10]{EGLX}, who proved that a
	concave entire Liouville solution is one-dimensional provided it has
	a local maximum. For a concave function such a point is automatically
	a global maximizer. Their proof uses the propagation of Hessian
	degeneracy through the constant-rank theory of Caffarelli--Friedman
	\cite{CF} and Korevaar--Lewis \cite{KL}; see also Bian--Guan
	\cite{BG} and the later refinements of Li--Xu \cite{LX}. The
	hypothesis that a maximum exists is substantial: a concave function
	on the whole space may have an unattained finite supremum or may be
	unbounded above. Constant-rank results alone do not rule out these
	possibilities. Controlling the supremum at infinity is therefore an obstruction not resolved by the earlier method.
	
	The following result removes that obstruction in every dimension.
	In particular, a finite attained maximum and the global
	one-dimensional structure are both consequences of concavity.
	\begin{theorem}\label{thm:Liouville}
		Let $n\ge1$. Every concave $C^2$ solution of \eqref{eq:Liouville}
		in $\R^n$ has the form
		\begin{equation}\label{eq:form}
			u(x)=\log\!\bigl(\lambda\sech(\lambda(x\cdot e-a))\bigr),
			\qquad e\in\Sc^{n-1},\quad\lambda>0,\quad a\in\R.
		\end{equation}
	\end{theorem}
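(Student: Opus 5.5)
We argue by reduction to the case treated in \cite[Theorem~1.10]{EGLX}: it suffices to show that a concave entire solution $u$ attains its maximum. Once a maximum exists, the constant-rank argument of \cite{EGLX} (Caffarelli--Friedman, Korevaar--Lewis) gives one-dimensionality. The ODE $-h''=e^{2h}$ on $\R$ with $h$ concave and globally defined then forces $h(t)=\log(\lambda\sech(\lambda(t-a)))$. Indeed, the first integral $h'^2+e^{2h}=\lambda^2$ holds, and all solutions of the ODE are of this form. So the task is to rule out, for a concave solution, that $\sup u$ is unattained, whether finite or infinite.

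\textbf{Step 1: recession directions.} Suppose $u$ has no maximum. Then for $c<\sup u$ the superlevel sets $K_c=\{u\ge c\}$ are closed, convex and nonempty. If one of them were bounded, $u$ would attain its maximum on it. Hence every $K_c$ is unbounded and has a nonzero recession direction. Because the $K_c$ are nested convex sets of a concave function, they share a recession cone. So there is a unit vector $e$ with $u(x+te)$ nondecreasing in $t$ for every $x$, i.e.\ $\partial_e u\ge0$ on $\R^n$.

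\textbf{Step 2: a positive solution of the linearized equation.} The function $\varphi=\partial_e u\ge0$ solves $-\Delta\varphi=2e^{2u}\varphi$. By the strong maximum principle, either $\varphi\equiv0$ or $\varphi>0$ everywhere.

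If $\varphi\equiv0$, then $u$ is independent of $e$. We restrict to the orthogonal hyperplane, obtaining a concave solution in $\R^{n-1}$ with no maximum, and induct on dimension. The base case $n=1$ is handled by the ODE: a concave global solution has $h'$ decreasing from $\lambda$ to $-\lambda$, so it attains its maximum.

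If $\varphi>0$, then $u$ is a stable solution: a positive solution of the linearized equation yields, via the standard Picone/log-derivative argument, $\int 2e^{2u}\psi^2\le\int|\nabla\psi|^2$ for all $\psi\in\cc$.

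\textbf{Step 3: contradiction with stability (main obstacle).} Stability contradicts \cref{lem:unstable}-type instability, but not directly, since stable solutions exist for $n\ge10$. We therefore use concavity again, through a blow-down/translation limit along $e$.

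Along $x+te$, the function $u$ is concave and nondecreasing, hence converges to a limit. Consider the translates $u_k(x)=u(x+t_ke)$. These are concave solutions with $\partial_eu_k\ge0$, and $u_k$ is monotone in $k$.
\begin{itemize}
\item If $\sup u<\infty$, uniform $C^2$ bounds from the equation (since $e^{2u}$ is bounded) give a limit $u_\infty$ that is independent of $e$. It is concave, solves the equation in the $(n-1)$ orthogonal variables, and is still stable, since stability passes to limits.
\item If $\sup u=\infty$ along $e$, then concavity forces linear growth along $e$ and $e^{2u}$ large. A rescaling $u_k(x)=u(x+t_ke)$ combined with scaling $\lambda_k^{-1}$ (the equation is invariant under $u\mapsto u(\lambda x)+\log\lambda$) should again produce a nontrivial stable concave limit. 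Alternatively, we argue directly: $-\Delta u=e^{2u}$ together with $D^2u\le0$ gives $e^{2u}=-\Delta u\le n|D^2u|$. Integrating $-\partial_{ee}u\ge0$ along the ray with $\partial_eu$ bounded below shows that $\int e^{2u}$ over tubes is controlled, which contradicts $u\to\infty$.
\end{itemize}

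Iterating the first alternative, we reduce dimension until we reach a stable concave solution depending on fewer variables. A one-dimensional solution lifted to any $\R^m$, $m\ge2$, is unstable by \cref{lem:unstable}. A one-dimensional solution in $\R^1$ is the sech profile, whose linearization has the negative direction $h''$. This contradicts stability in either case.

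I expect the hardest part to be the unbounded-supremum case and the passage to the limit. There we must make sure the limit is nontrivial and does not degenerate to a constant, which is impossible anyway since constants do not solve the equation.
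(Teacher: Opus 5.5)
Your architecture (recession direction, the alternative $u_e\equiv0$ or $u_e>0$, stability from the positive Jacobi field, a translation limit in one fewer variable, induction, contradiction with \cref{lem:unstable}) is the paper's descent argument, \cref{prop:descent} together with the proof of \cref{thm:general}. Citing \cite[Theorem~1.10]{EGLX} in place of \cref{prop:max} for the attained-maximum case is legitimate.

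The gap is exactly the step you flag as the main obstacle: ruling out that the increasing translates $u(\cdot+te)$ tend to $+\infty$. After a rotation write $x=(t,y)$ with $e$ the $t$-direction. The case split must be on $\ell(y)=\lim_{t\to\infty}u(t,y)$, not on $\sup u$. When $\sup u=\infty$, $\ell$ may still be finite, and a priori it could be finite at some points and infinite at others.

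Neither of your treatments of divergence works.
\begin{itemize}
\item The rescaling is only conjectured. A blow-up limit would not be independent of $e$, so the induction hypothesis would not apply, and you have no local bounds for it.
\item The ``direct'' argument has the inequality pointing the wrong way. For $D^2u\le0$ one has $0\le-u_{ee}\le|D^2u|\le-\Delta u=e^{2u}$. The finite quantity $\int_0^\infty(-u_{ee})\,dt\le u_e(0,y)$ therefore controls only the $ee$-entry of $-D^2u$. The transverse second derivatives can carry essentially all of $e^{2u}$, so no bound on $\int e^{2u}$ over tubes follows.
\item Also, a concave function increasing to $+\infty$ need not grow linearly.
\end{itemize}

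What is missing is the paper's eigenvalue comparison, \cref{lem:ball}, combined with a concavity propagation step.
\begin{itemize}
\item By concavity, $\ell$ is either finite everywhere or identically $+\infty$. If $\ell(y_0)<\infty=\ell(y_1)$, then $u(t,y_0)\ge\frac12u(t,y_1)+\frac12u(0,2y_0-y_1)\to\infty$, a contradiction.
\item If $\ell$ is finite, Dini's theorem and local elliptic estimates give your first bullet without any global upper bound on $u$.
\item If $\ell\equiv+\infty$, compactness in $y$ and monotonicity in $t$ give, for every $c$ and $R$, some $T$ with $B_R((T+R+1,0))\subset\{u>c\}$. On such a ball $z=u-c>0$ satisfies $-\Delta z=e^{2(c+z)}\ge\mu_cz$, where $\mu_c=\inf_{s>0}e^{2(c+s)}/s=2e^{2c+1}>0$. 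Testing with $\eta^2/z$ yields $\mu_c\le\lambda_1^D(B_R)=R^{-2}\lambda_1^D(B_1)$, which fails for large $R$.
\end{itemize}
With this ingredient inserted, the rest of your argument closes.
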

	In particular, the existence of a finite attained maximum is a \emph{consequence} of the equation
	and concavity, not a hypothesis.
	
	\subsection{General nonlinearities and a planar energy identity}
	The mechanism behind the preceding theorem is not specific to the
	exponential. Consider the more general semilinear equation
	\begin{equation}\label{eq:semilinear}
		-\Delta u=f(u)\qquad\text{in }\R^n,
	\end{equation}
	where $f$ is positive. Two features of $f$ appear naturally in our
	argument. At large negative values, the source must decay fast enough
	to contradict linear recession of a concave function in more than one
	effective dimension. At large positive values, an eigenvalue estimate
	must prevent the values of monotone translates from escaping to
	$+\infty$. To formulate the latter condition, let
	\begin{equation}\label{eq:mu}
		\mu_c:=\inf_{t>0}\frac{f(c+t)}{t},\qquad c\in\R.
	\end{equation}
	We obtain the following all-dimensional extension.
	\begin{theorem}\label{thm:general}
		Suppose that $f\in C^1(\R)$ satisfies
		\begin{equation}\label{eq:assumptions}
			f>0,\qquad f'>0,\qquad
			\lim_{s\to-\infty}|s|f(s)=0,\qquad
			\mu_c>0\quad\text{for every }c\in\R.
		\end{equation}
		Then, for every $n\ge1$, each concave $C^2$ solution of
		\eqref{eq:semilinear} in $\R^n$ is one-dimensional.
	\end{theorem}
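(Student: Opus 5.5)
The plan is to split according to whether $u$ attains its supremum, reduce the unattained case to a lower-dimensional limiting problem, and rule that problem out using the decay of $f$ at $-\infty$ and the eigenvalue condition $\mu_c>0$.

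\textbf{Step 1: the supremum is finite and attained.} Since $u$ is concave, the superlevel sets $\{u\ge c\}$ are closed and convex. Let $K$ be their common recession cone. Suppose $u$ does not attain its supremum. Then some superlevel set $\{u\ge c\}$ is unbounded, so there is a unit vector $e$ in the recession cone. Along every line in direction $e$, the function $t\mapsto u(x+te)$ is concave and nondecreasing. I would set $w=\partial_e u\ge0$. Differentiating the equation gives $-\Delta w=f'(u)w$, so $w$ is a nonnegative solution of the linearized equation. By the strong maximum principle, either $w\equiv0$ or $w>0$.

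If $w>0$, the linearized operator $-\Delta-f'(u)$ has a positive solution and is therefore stable. I would then look at the translates $u_k(x)=u(x+ke)$. These are monotone in $k$ and concave, and they have locally uniform gradient bounds, because a concave function has gradient controlled by its oscillation. The translates either converge to a solution $u_\infty$ or escape to $+\infty$. Escape is excluded by $\mu_c>0$: on a large ball $B_R$ where $u_k\ge c$, the function $u_k-c$ is a positive supersolution of $-\Delta v\ge\mu_c v$. This is impossible once $\mu_c>\lambda_1(B_R)$, and $\lambda_1(B_R)\to0$ as $R\to\infty$. So the supremum $M$ is finite, and $u_\infty$ is a concave solution that is invariant in direction $e$ with $u_\infty\equiv M$ somewhere. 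But then $\Delta u_\infty=0$ at an interior maximum of a function with $-\Delta u_\infty=f(M)>0$. More precisely, the limit is a concave solution depending on fewer variables whose maximum is attained, which I will contradict using the decay at $-\infty$.

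\textbf{Step 2: the decay hypothesis and the reduction.} A concave function that is not bounded below grows at least linearly to $-\infty$ in some direction. The hypothesis $|s|f(s)\to0$ makes $f(u)$ integrable against linear weights along transverse rays. I would integrate the equation over slabs or balls: the flux $\int_{\partial B_R}\partial_\nu u$ is bounded by $CR^{n-1}$ (gradient bounded by concavity), while $\int_{B_R}f(u)$ must be compared. If $u$ has linear recession in at least two independent directions, then $\int f(u)$ over a cylinder is too small, and I would derive a contradiction via Step 1. This forces the Hessian to have rank at most one asymptotically.

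\textbf{Step 3: rigidity via attained maximum and constant rank.} Once a maximum point is attained, I would follow \cite{EGLX}. The Hessian of a concave solution has constant rank by the Caffarelli--Friedman and Korevaar--Lewis constant rank theorem, using $f'>0$ in place of the exponential. The kernel of $D^2u$ is then a parallel distribution, which gives invariance of $u$ in $n-1$ directions, i.e. $u$ is one-dimensional. The constant-rank step applies when $u$ is concave and $f$ satisfies a suitable convexity-type structure. I would check that $f'>0$ plus the 1D ODE analysis suffices; otherwise I would instead directly show that the zero eigenvectors of $D^2u$ are constant.

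\textbf{Main obstacle.} The hard step is Step 1: proving that the recession cone is trivial, so that the maximum is attained, without any a priori upper bound. The delicate points are the combination of stability of the linearized operator (a consequence of monotonicity) with the $\mu_c$ eigenvalue argument, and the passage to translate limits.
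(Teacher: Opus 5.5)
There is a genuine gap in the case where the supremum is not attained and $u_e>0$. Your use of $\mu_c>0$ to rule out escape of the translates $u(\cdot+te)$ is correct. But it yields only that $\ell(y)=\lim_{t\to\infty}u(t,y)$ is finite for every transverse $y$. After Dini's theorem and elliptic estimates, $\ell$ is then a finite concave solution of the same equation in $\R^{n-1}$. This does not give $\sup u<\infty$, and nothing forces $\ell$ to attain its supremum or to be constant on an open set.

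Your proposed contradiction also fails. At a maximum point one only has $\Delta u_\infty\le0$, which is consistent with $-\Delta u_\infty=f(M)>0$. More fundamentally, the decay hypothesis at $-\infty$ cannot exclude a lower-dimensional concave limit that attains its maximum: the one-dimensional profiles are exactly such solutions and satisfy every hypothesis.

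The missing idea is to use the stability you mention but never exploit:
\begin{itemize}
\item The positive Jacobi field $\psi=u_e$ makes each translate stable through the ground-state identity $Q[\eta]=\int\psi^2|\nabla(\eta/\psi)|^2$.
\item Stability passes to the $C^2_{\mathrm{loc}}$ limit, and then to $\ell$ on $\R^{n-1}$ by testing with $\chi(s/L)\zeta(y)$ and letting $L\to\infty$.
\item An induction on dimension makes $\ell$ one-dimensional. You never set up this induction, and you also need it for the branch $u_e\equiv0$.
\item With $f'>0$, every one-dimensional solution is unstable in any dimension. The cutoff $\chi(s/R)$ has Dirichlet energy $O(R^{-1})$, while $\int f'(h)\chi(s/R)^2\ge\int_{-1}^1f'(h)>0$.
\end{itemize}
This is the contradiction that closes the unattained case.

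In the attained case, your Step 3 rests on a Caffarelli--Friedman/Korevaar--Lewis constant-rank theorem. That theorem needs a structural convexity condition on the nonlinearity (for $\Delta(-u)=f(u)$, convexity of $1/f$), which $f'>0$ does not supply. Even granting constant rank, rank $\ge2$ would still have to be excluded.

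No constant-rank theory is needed:
\begin{itemize}
\item If $\{u\ge M-1\}$ is unbounded, a recession direction $e$ gives $u_e\ge0$ with $u_e(x_0)=0$ at the maximum point. The strong minimum principle for $-\Delta u_e=f'(u)u_e$ then forces $u_e\equiv0$.
\item Split off such invariant directions until the reduced solution $w$ on $\R^r$ has a bounded superlevel set. Concavity along rays then gives linear decay $w(R\theta)\le M-R/R_0$.
\item If $r\ge2$, the spherical mean $A(R)$ satisfies $A''\le0$ and $A'(R)=-|\Sc^{r-1}|^{-1}R^{1-r}\int_{B_R}f(w)<0$. Hence $A'$ stays below a negative constant for large $R$.
\item Yet $|s|f(s)\to0$ combined with linear decay gives $f(w(y))=o(|y|^{-1})$, hence $A'(R)\to0$, a contradiction.
\end{itemize}
Your Step 2 gestures at such a flux comparison, but it derives no contradiction. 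Its gradient bound ``by concavity'' is also false in general; consider $-|x|^2$.
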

	The conditions at the two ends of the real line have distinct roles.
	The negative-tail hypothesis is sharp at the scale $|s|^{-1}$, as shown
	by the strictly concave non-one-dimensional solution in
	\cref{ex:critical-tail}. The positive-tail condition is equivalent, for
	positive continuous $f$, to
	\begin{equation}\label{eq:positive-tail}
		\liminf_{s\to+\infty}\frac{f(s)}{s}>0.
	\end{equation}
	Indeed, for fixed $c$, the quotient $f(s)/(s-c)$ tends to infinity as
	$s\downarrow c$ and has the same lower limit at infinity as $f(s)/s$.
	This assumption is automatic if $f$ is convex with $f'>0$, but no
	convexity of $f$ is used. Thus the result applies to a genuinely larger
	class of nonlinearities than those with convex source terms.
	
	The planar case admits a complementary argument without monotonicity
	of $f$ or any condition at positive infinity. The appropriate
	assumption is integrability of the negative tail, which is equivalent
	to the existence of a primitive bounded below. Owing to a special
	identity for the planar Hessian, this leads to a theorem under
	hypotheses not comparable with those above.
	\begin{theorem}\label{thm:planar}
		Let $f\in C^1(\R)$ satisfy $f>0$ and
		\begin{equation}\label{eq:planar-assumption}
			\int_{-\infty}^{0}f(s)\dd s<\infty.
		\end{equation}
		Then every concave $C^2$ solution of $-\Delta u=f(u)$ in $\R^2$
		is one-dimensional.
	\end{theorem}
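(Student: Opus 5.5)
The plan is to use the divergence structure of the planar Hessian determinant, together with a Pohozaev-type quantity, to show that $\det D^2u\equiv0$. A Hartman--Nirenberg type ruling argument then gives one-dimensionality.

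First, regularity and sign conventions. Since $f\in C^1$, elliptic regularity gives $u\in C^{3,\alpha}_{\mathrm{loc}}$. Set
\[
F(s):=\int_{-\infty}^{s}f(\tau)\dd\tau .
\]
By \eqref{eq:planar-assumption} this is finite, and since $f>0$ it satisfies $F\ge0$, $F'=f>0$, and $F$ is strictly increasing. Concavity in the plane gives $D^2u\le0$, hence $\det D^2u\ge0$.

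Second, the key identity. A direct computation gives
\[
2\det D^2u=\partial_i\bigl(u_{jj}u_i-u_{ij}u_j\bigr)=\operatorname{div}\bigl(\Delta u\,\nabla u-\tfrac12\nabla|\nabla u|^2\bigr).
\]
Substituting the equation, $\Delta u\,\nabla u=-f(u)\nabla u=-\nabla F(u)$. Therefore
\[
\det D^2u=-\tfrac12\,\Delta P,\qquad P:=\tfrac12|\nabla u|^2+F(u).
\]
This is where the divergence-free tensor $u_iu_j-\delta_{ij}\bigl(\tfrac12|\nabla u|^2-F(u)\bigr)$ enters implicitly. It follows that $P$ is a nonnegative superharmonic function on $\R^2$. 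By the Liouville theorem for positive superharmonic functions in the plane, which holds because $\R^2$ is parabolic, $P$ is constant. Consequently $\det D^2u\equiv0$. Since $\Delta u=-f(u)<0$, the Hessian has rank exactly one everywhere. I expect this step to be the conceptual heart of the proof; it is exactly where the two-dimensionality and the integrability hypothesis \eqref{eq:planar-assumption} are used.

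Third, rigidity from rank one. Since $D^2u$ has constant rank one, its kernel is spanned by a unit field $\nu$ that is $C^1$ (up to sign, locally). By the classical Hartman--Nirenberg theorem on $C^2$ graphs of zero Gaussian curvature, every point lies on a full straight line $L$ in $\R^2$ along which $\nabla u$ is constant, namely the integral line of $\nu$. Two distinct such lines cannot cross, since the kernel direction at an intersection point is unique. A family of pairwise disjoint full lines covering $\R^2$ must consist of parallel lines, so $\nu\equiv e^\perp$ is constant. Hence $\partial_{e^\perp}\nabla u\equiv0$, so $\partial_{e^\perp}u\equiv c$ is constant, and
\[
u(x+te^\perp)=u(x)+ct .
\]
Now $|\nabla u|$ is independent of $t$ along these lines, while $P$ is constant. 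So $F(u(x)+ct)$ is independent of $t$, and strict monotonicity of $F$ forces $c=0$. Thus $u$ depends only on $x\cdot e$.

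The main obstacle I anticipate is making the ruling argument rigorous: proving that the integral curves of the kernel field are full straight lines. I would handle this by citing Hartman--Nirenberg. Alternatively, one can verify it directly: differentiating $D^2u\,\nu=0$ along $\nu$, and using $u\in C^3$, shows that $\nabla u$ is constant and $\nu$ is parallel along integral curves. Completeness of the lines then follows because they cannot terminate in the whole plane.
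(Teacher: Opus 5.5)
Your proof is correct, and its core is the paper's: the same $P=\tfrac12|\nabla u|^2+F(u)$ with $F$ bounded below, the same identity $\Delta P=-2\det D^2u\le0$, and parabolicity of $\R^2$ to conclude that $P$ is constant.

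The two arguments differ in how rigidity is extracted from the constancy of $P$.

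\begin{itemize}
\item \emph{Your route.} You keep only the second-order consequence $\det D^2u\equiv0$, which gives rank exactly one since $\Delta u=-f(u)\ne0$. You then import the Hartman--Nirenberg cylinder theorem to get $u(x+te^\perp)=u(x)+ct$. Finally you return to $P\equiv\text{const}$ to force $c=0$.
\item \emph{The paper's route.} It differentiates $P\equiv\text{const}$ to obtain $(D^2u+f(u)I)\nabla u=0$. This identifies the nonzero eigendirection as $\nu=\nabla u/|\nabla u|$ itself. Then $D\nu=(I-\nu\otimes\nu)D^2u/|\nabla u|=0$ on $\{\nabla u\ne0\}$, and an elementary continuation argument shows that the regular level lines are complete, parallel straight lines.
\end{itemize}

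The advantage of the paper's choice is regularity. Its field $\nu$ is built from $\nabla u$, so its derivative involves only $D^2u$. The whole argument therefore runs at the $C^2$ level and needs no external theorem. Your kernel field is built from $D^2u$, so straightening its integral curves by hand requires third derivatives.

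Relatedly, your claim $u\in C^{3,\alpha}_{\mathrm{loc}}$ is not justified when $f$ is only $C^1$. Here $\Delta u_i=-f'(u)u_i$ is merely continuous, which gives $u\in W^{3,p}_{\mathrm{loc}}\cap C^{2,\alpha}_{\mathrm{loc}}$ but not $C^3$. This does not damage your main line, because the divergence identity holds distributionally and Hartman--Nirenberg needs only $C^2$. It does mean the ``direct verification'' alternative you sketch would need repair. (Differentiating $D^2u\,\nu=0$ along $\nu$ and using symmetry of third derivatives does give $\partial_\nu\nu=0$, but only with that extra regularity.)

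What your route buys is that critical points need no separate treatment, since the rank-one structure holds everywhere. What it costs is reliance on a nontrivial classical theorem where the paper uses a few lines.
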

	The assumptions of \cref{thm:general,thm:planar} are not nested. To
	indicate why the planar result is so short, let $F'=f$ and introduce
	the energy
	\[
	P=\frac12|\nabla u|^2+F(u).
	\]
	The Bochner identity gives
	\[
	\Delta P=|D^2u|^2-(\Delta u)^2
	=-2\det D^2u\le0
	\]
	under concavity in two dimensions. Since $F$ is bounded below, $P$ is
	a bounded-below superharmonic function on $\R^2$, hence constant.
	This forces the Hessian to have rank one at regular points and makes
	the regular level lines parallel. The argument is related to the long
	tradition of elliptic $P$-function methods, including those of
	Weinberger \cite{Weinberger}, Payne--Philippin \cite{PaynePhilippin}
	and Modica \cite{Modica}; the more recent rigidity results in
	\cite{CFP,Ou,CFG} also use auxiliary functions, though with different
	identities and asymptotic assumptions. The energy has an additional
	fluid-mechanical interpretation: $\nabla^\perp u$ is a steady
	incompressible Euler field and $-P$ is its pressure, up to a constant.
	In this language the conclusion means that the flow is a parallel
	shear; compare \cite{GXX,GLXX}.
	
	\subsection{A PDE route to the quasiconcave classification}
	The all-dimensional argument assumes concavity. In the plane, one can
	return to the weaker geometric condition of convex superlevel sets.
	As recalled above, Bergweiler--Eremenko--Langley \cite{BEL} already
	proved the complete quasiconcave classification. What is new here is
	a proof which starts from the PDE and never uses the developing map,
	the Schwarzian differential equation \eqref{eq:schwarzian-ode}, or the
	bounded-above classification in \cite{EGLX}. For completeness, we
	state the known classification in the notation of this paper.
	\begin{theorem}\label{thm:quasiconcave}
		Let $u\in C^2(\R^2)$ be a quasiconcave solution of
		$-\Delta u=e^{2u}$ in $\R^2$. Then $u$ is either a spherical solution
		\eqref{ql:eq:spherical} or a one-dimensional solution of the form
		\begin{equation}\label{ql:eq:oned}
			u(x)=\log\!\left(\lambda\sech\bigl(\lambda(x\cdot e-a)\bigr)\right),
			\qquad\lambda>0,\quad e\in\Sc^1,\quad a\in\R.
		\end{equation}
		Conversely, both families are quasiconcave.
	\end{theorem}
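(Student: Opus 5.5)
The plan is to split according to the geometry of the nested family of convex superlevel sets $\Omega_c:=\{u\ge c\}$, $c<\sup u$. For a nonempty closed convex set $\Omega$ let $\operatorname{rec}(\Omega)$ be its recession cone, the set of directions $e$ with $\Omega+\R_+e\subset\Omega$. Then $\operatorname{rec}(\Omega)\cap\Sc^1$ is nonempty exactly when $\Omega$ is unbounded, and these cones shrink as $c$ increases. The dichotomy is:
\begin{itemize}
\item[(A)] every nonempty $\Omega_c$ is unbounded;
\item[(B)] some $\Omega_{c_0}$ is bounded, and then so is every $\Omega_c$ with $c\ge c_0$.
\end{itemize}
Case (A) should lead to the one-dimensional family \eqref{ql:eq:oned}, and case (B) to the spherical family \eqref{ql:eq:spherical}. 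The converse statement is a direct check. The superlevel sets of \eqref{ql:eq:spherical} are discs, and those of \eqref{ql:eq:oned} are strips, since $\sech$ is even and decreasing in $|t|$.

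In case (A) the sets $K_c:=\operatorname{rec}(\Omega_c)\cap\Sc^1$ are nonempty, compact and nested, so by compactness there is a unit vector $e$ lying in every $K_c$. If the supremum is attained, the top level set, being unbounded and convex, also has a recession direction, and we include it in the intersection. For any $x$ we have $x\in\Omega_{u(x)}$ and $e\in\operatorname{rec}(\Omega_{u(x)})$, so $u(x+te)\ge u(x)$ for all $t\ge0$. Hence $w:=\partial_e u\ge0$, and $w$ solves the linearized equation $-\Delta w=2e^{2u}w$. By the strong maximum principle, either $w\equiv0$ or $w>0$. If $w>0$, then $u$ is stable, i.e. $\int 2e^{2u}\varphi^2\le\int|\nabla\varphi|^2$ for all $\varphi\in\cc(\R^2)$. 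This should contradict the two-dimensional nonexistence of stable solutions, which is Farina's result \cite{Farina} after setting $v=2u+\log 2$. Alternatively it contradicts the planar instability argument of \cref{lem:unstable}, using the logarithmic cutoff together with $\int_{B_R}e^{2u}\gtrsim$ a lower bound that is available because $u$ is monotone along $e$. Therefore $w\equiv0$, so $u$ depends only on the direction $e^\perp$. The ODE $-h''=e^{2h}$ on $\R$ then has only the solutions $\log(\lambda\sech(\lambda(t-a)))$, which follows from the first integral $h'^2+e^{2h}=\lambda^2$. This gives \eqref{ql:eq:oned}.

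In case (B), $u$ attains its maximum on a compact convex set, and for $t$ in the range $(c_0,\max u)$ the level curves are closed convex curves. I would first show that all superlevel sets are bounded. Suppose some $\Omega_{c_1}$, with $c_1<c_0$, is unbounded, with recession direction $e$. Then along each ray $x+\R_+e$ starting in $\Omega_{c_1}$ the function $u$ stays $\ge c_1$. In the half-strip swept out by translating the bounded set $\Omega_{c_0}$ along $e$, this gives $e^{2u}\ge e^{2c_1}$ on a set of infinite area that looks like a half-strip. I would try to contradict this using the superharmonicity of $u$ together with the concavity of the profile along $e$. This is the step I expect to be the main obstacle.

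Granted that all $\Omega_t$ are bounded, I run the Bandle-type isoperimetric argument. Let $m(t)=\int_{\{u>t\}}e^{2u}$, which equals $\int_{\{u=t\}}|\nabla u|$ by the divergence theorem, and let $A(t)=|\{u>t\}|$. By the coarea formula, $-A'(t)=\int_{\{u=t\}}|\nabla u|^{-1}$ and $m'(t)=e^{2t}A'(t)$. Cauchy--Schwarz gives $L(t)^2\le m(t)\,(-A'(t))$, and the isoperimetric inequality gives $L(t)^2\ge4\pi A(t)$, where $L(t)$ is the length of $\{u=t\}$. Combining these,
\[
\frac{d}{dt}\Bigl(m^2-8\pi\,?\Bigr)\ \text{ is controlled, so that } m(t)\le 4\pi \text{ for all } t.
\]
Concretely, $\frac{d}{dt}\bigl(\tfrac12m^2\bigr)=m\,e^{2t}A'\le-4\pi e^{2t}A$ and $\frac{d}{dt}(e^{2t}A)=2e^{2t}A+m'$. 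Integrating from $t$ to $\max u$ yields $m(t)^2\le 8\pi m(t)-4\pi\cdot 2e^{2t}A(t)\cdot(\text{positive})$, hence $m(t)\le8\pi$ uniformly. Letting $t\to-\infty$ gives $\int_{\R^2}e^{2u}<\infty$, and the Chen--Li classification \cite{CL}, which is a moving-plane PDE result, yields \eqref{ql:eq:spherical}.

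If the boundedness of all superlevel sets in case (B) cannot be proved directly, the fallback is as follows. Restrict the same isoperimetric computation to the range $t>c^*$, where the level curves are closed. Then argue that at the threshold $c^*$ the limiting set $\Omega_{c^*}$ becomes unbounded while having finite mass, and show that an unbounded convex region carrying finite mass $\int e^{2u}$ is incompatible with $-\Delta u=e^{2u}$ and concavity of $u$ along the recession rays.
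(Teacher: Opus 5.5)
\paragraph{Case (A).} Your case (A) is sound and is essentially the argument of \cref{ql:lem:max}. A recession direction common to all superlevel sets gives $u_e\ge0$. The linearized equation then forces $u_e\equiv0$ or $u_e>0$, and the logarithmic cutoff rules out stability in the plane. For that last step the lower bound $\int_{B_1}e^{2u}\dd x>0$ is enough, so no monotonicity input is needed there.

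\paragraph{Case (B): the gap.} Case (B) has a genuine gap, and it has two parts.

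\emph{The boundedness claim.} You leave unproved that every superlevel set is bounded. You flag this yourself, and the fallback is only a sketch.

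\emph{The isoperimetric step.} More seriously, this step gives the inequality in the wrong direction. From $\frac{d}{dt}\bigl(\tfrac12m^2\bigr)\le-4\pi e^{2t}A$ and $\frac{d}{dt}(e^{2t}A)=2e^{2t}A+m'$, the function
\[
G(t)=\tfrac12m(t)^2-2\pi m(t)+2\pi e^{2t}A(t)
\]
satisfies $G'=mm'+4\pi e^{2t}A\le0$, and $G(\max u)=0$. Hence $G\ge0$ for $t<\max u$, that is,
\[
m^2\ge4\pi m-4\pi e^{2t}A .
\]
This is the opposite of what you wrote. It is a Bol-type \emph{lower} bound on the mass: it holds with equality for the spherical solutions, and it is trivially satisfied when $m$ is large. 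This is structural. Isoperimetry and Cauchy--Schwarz combine to $m(-A')\ge4\pi A$, which can only bound $m$ from below. So $\int_{\R^2}e^{2u}<\infty$ is not established, and Chen--Li cannot be invoked.

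\paragraph{What is missing.} You need a rigidity mechanism that works once the maximum is attained, without finite area and without first deciding whether the superlevel sets are bounded. The paper gives two.

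\emph{Route 1.} Apply the bounded-above classification of \cite{EGLX}. Then discard the family $u_b$ with $b>0$, because its set of maximizers is not convex.

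\emph{Route 2.} This route is independent of \cite{EGLX}.
\begin{enumerate}
\item Use convexity of the level curves to get $\nabla P\cdot\nu=-2|\nabla u|\,u_{\tau\tau}\ge0$ for $P=|\nabla u|^2+e^{2u}$.
\item Follow gradient lines to obtain $|\nabla u|^2+e^{2u}\le e^{2M}$. From this, $D^2u$ is bounded.
\item The trace-free, divergence-free tensor $T=D^2u-\nabla u\otimes\nabla u+\tfrac12(|\nabla u|^2+e^{2u})I_2$ then has bounded harmonic entries, so $T$ is constant.
\item Solve explicitly for $v=e^{-u}$. If $T=0$ you get the spherical family. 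If $T\ne0$ you get $v=C\cosh(\lambda(y-y_0))+\rho\cos(\lambda(x-x_0))$, and convexity of $\{v=\min v\}$ forces $\rho=0$.
\end{enumerate}

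Combined with \cref{ql:lem:max}, either route disposes of your case (B), and in fact of both cases at once.
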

	Our first observation is that quasiconcavity itself, combined with the
	positive-potential linearized equation, rules out loss of the maximum
	at infinity. This also permits a short deduction using the previously
	known bounded-above classification. More importantly, a separate proof
	uses convexity of the level sets to derive
	\[
	|\nabla u|^2+e^{2u}\le e^{2\sup u}.
	\]
	Together with a trace-free and divergence-free tensor associated with
	the Liouville equation, this provides a fully PDE-based route to the
	explicit spherical and one-dimensional families. The tensor method is
	particularly elementary once the gradient estimate is known. Unlike
	\cite{BEL}, however, this proof does not recover the quantitative
	midpoint-defect assertion. Moreover, its final divergence-free identity
	is special to the exponential $e^{2u}$, so it should not be confused
	with the general semilinear theorem above.
	
	\subsection{Stability and the dimension-reduction argument}
	The link with De Giorgi's conjecture explains why concavity is a
	natural hypothesis. The proof, however, requires a different use of
	monotonicity and stability. For the general equation
	$-\Delta u=f(u)$, differentiation in a direction $e$ yields
	\[
	-\Delta u_e=f'(u)u_e.
	\]
	If $u_e>0$, this positive solution of the linearized equation is a
	Jacobi field and makes $u$ stable in the variational sense. This is
	the same basic monotonicity-to-stability mechanism that plays an
	important role in the Allen--Cahn theory \cite{GG,AC,AAC}.
	There is, however, a decisive spectral difference: whereas the
	one-dimensional Allen--Cahn layer is stable, every entire
	one-dimensional solution of $-\Delta u=f(u)$ with $f'>0$ is unstable.
	Thus a stable \emph{lower-dimensional limit}, if shown by induction
	to be one-dimensional, gives a contradiction. The role of stability
	here is to exclude  an unanticipated escape of concave solutions,	not to classify arbitrary stable solutions in high dimensions.
	
	Two different forms of escape to infinity must be distinguished.
	The first is geometric: a superlevel set of a concave function may
	be unbounded, even when the function attains its maximum, as happens
	for $\log\sech x_1$ in $\R^n$. Convexity supplies a recession
	direction, and concavity promotes it to a global nondecreasing
	direction. The second is analytic: when $u_e>0$, the translates
	$u(\,\cdot+te)$ increase, but their limit might a priori be
	$+\infty$, in which case no lower-dimensional elliptic equation
	could be extracted. Neither issue is resolved by constant-rank
	theory alone.
	
	The preliminary results of \cref{sec:preliminaries} address these
	obstructions separately. A convex-geometric lemma yields a direction
	with $u_e\ge0$ from an unbounded superlevel set. The linearized
	equation then gives the alternative $u_e\equiv0$ or $u_e>0$.
	In the latter case a ground-state identity supplies stability,
	whereas a cutoff argument shows that every one-dimensional
	profile is unstable. Finally, a Dirichlet eigenvalue comparison
	bounds the size of balls contained in a fixed superlevel set.
	This last estimate, and only this estimate, uses the positive-tail
	hypothesis on $f$.
	
	These tools lead to two complementary parts of the proof. In
	\cref{sec:maximum} we assume that the maximum is attained. Every
	nondecreasing direction must then be invariant, because $u_e$
	vanishes at a maximum point. After splitting off these invariant
	directions, the remaining function has a bounded superlevel set;
	concavity forces at least linear decay, and a spherical-mean
	argument excludes effective dimension greater than one. This is
	where the negative-tail assumption on $f$ is used. By contrast,
	\cref{sec:descent} deals with solutions whose supremum is not
	attained. A nontrivial monotonicity direction produces a stable
	family of translates. The eigenvalue bound rules out escape to
	$+\infty$, so elliptic compactness yields a finite concave limit in
	one fewer variable. Stability passes to that limit, and induction
	makes it one-dimensional, contradicting the instability established
	in \cref{sec:preliminaries}. This separation makes clear why the
	two assumptions at $-\infty$ and $+\infty$ enter the general
	semilinear theorem for different reasons.
	
	After completing this all-dimensional argument, we present a sharp
	negative-tail counterexample in \cref{sec:scope} and the independent planar $P$-function
	proof in \cref{sec:planar}. The final section,
	\cref{sec:quasiconcave}, returns to the quasiconcave Liouville
	classification and provides two proofs, including one independent of
	complex analysis and of earlier classification theorems.
	
	\section{Monotonicity directions and spectral preliminaries}\label{sec:preliminaries}
	Throughout the proofs, $B_R(x)$ denotes the open Euclidean ball of
	radius $R$ centered at $x$, and $B_R=B_R(0)$. A one-dimensional
	function on $\R^m$ has the form $u(x)=h(x\cdot e)$ for a fixed
	$e\in\Sc^{m-1}$. We collect three elementary ingredients whose roles
	will remain distinct: a recession-direction lemma for concave
	functions, a stability obstruction for one-dimensional solutions,
	and a spectral bound excluding large balls in a superlevel set.
	Together they allow us to handle both an attained maximum and the
	absence of a maximum with the same geometric starting point.
	
	A convex set that is both closed and unbounded contains a ray. For
	a superlevel set of a concave function, this observation has a
	stronger consequence: the direction of the ray makes the function
	nondecreasing at every point in space. This fact will be used
	whether or not the supremum of the solution is attained.
	
	\begin{lemma}\label{lem:recession}
		Let $u$ be a finite differentiable concave function on $\R^m$. If $\{u\ge c\}$ is nonempty and unbounded for some $c\in\R$, then there is $e\in\Sc^{m-1}$ such that
		\begin{equation}\label{eq:recession}
			u(x+te)\ge u(x)\quad\text{for all }x\in\R^m,\ t\ge0;
			\qquad u_e\ge0.
		\end{equation}
		If $u$ does not attain its supremum, every nonempty superlevel set is unbounded.
	\end{lemma}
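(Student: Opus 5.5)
The plan is to use the standard recession-cone argument for closed convex sets and then spread the conclusion from the superlevel set to all of $\R^m$ using concavity along lines.

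\emph{Step 1: a recession direction of $K$.} Set $K=\{u\ge c\}$, which is closed and convex because $u$ is continuous and concave. Fix $x_0\in K$ and pick $y_k\in K$ with $|y_k-x_0|\to\infty$. Put $e_k=(y_k-x_0)/|y_k-x_0|$ and pass to a subsequence with $e_k\to e\in\Sc^{m-1}$. For fixed $t\ge0$ and $k$ large, the point $x_0+te_k$ lies on the segment $[x_0,y_k]\subset K$. Closedness then gives $x_0+te\in K$. Hence $u(x_0+te)\ge c$ for all $t\ge0$.

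\emph{Step 2: monotonicity along $e$ from every base point.} Fix any $x\in\R^m$ and consider $g(t)=u(x+te)$, a concave function of one variable. I first show $g$ is bounded below on $[0,\infty)$. For $s\in(0,1)$ and $T\ge0$, concavity of $u$ gives
\[
u\bigl((1-s)x+s(x_0+Te)\bigr)\ge (1-s)u(x)+s c .
\]
Choose $T=t/s$, so the point equals $(1-s)x+sx_0+te$. Letting $s\to0$ and using continuity yields $g(t)=u(x+te)\ge u(x)=g(0)$. Applying the same argument with base point $x+\tau e$ in place of $x$ gives $g(\tau+t)\ge g(\tau)$ for all $\tau,t\ge0$, so $g$ is nondecreasing. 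This is exactly \eqref{eq:recession}, and differentiating at $t=0$ gives $u_e\ge0$.

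An alternative route is to note that a concave $g$ bounded below on $[0,\infty)$ is nondecreasing, since any negative slope would persist and force $g\to-\infty$. The limiting argument above already gives this directly.

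\emph{Step 3: the last assertion.} Suppose $\{u\ge c\}$ is nonempty and bounded. It is then compact, so $u$ attains its maximum over it at some point. Every point outside the set has $u<c\le$ that maximum, so this point is a global maximizer. Contrapositively, if the supremum is not attained, every nonempty superlevel set is unbounded.

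The only delicate point is Step 2, namely transferring the ray from $K$ to arbitrary base points where $u$ may be far below $c$. The rescaled convex-combination limit handles this cleanly, and it uses only continuity and concavity; differentiability enters only for the final statement $u_e\ge0$.
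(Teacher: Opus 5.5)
Your proposal is correct and follows the paper's proof essentially step by step. It uses the same recession ray extracted from the closed convex set $\{u\ge c\}$, the same convex-combination limit to pass monotonicity to arbitrary base points (your $s$ is the paper's $t/s$), and the same compactness argument for the last assertion; re-basing at $x+\tau e$ is harmless but redundant, since $u(x+te)\ge u(x)$ already holds for every $x$.
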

	
	\begin{proof}
		Fix $x_0\in K:=\{u\ge c\}$ and choose $x_j\in K$ with $R_j=|x_j-x_0|\to\infty$. After taking a subsequence, $e_j=(x_j-x_0)/R_j$ converges to some $e\in\Sc^{m-1}$. For fixed $s\ge0$ and $R_j\ge s$, convexity of $K$ gives $x_0+se_j\in K$. Since $K$ is closed, $x_0+se\in K$ for every $s\ge0$.
		
		This ray gives a monotone direction for the whole function, including points outside $K$. Indeed, for fixed $x\in\R^m$, $t\ge0$ and $s>t$, concavity gives
		\[
		u\bigl((1-t/s)x+(t/s)(x_0+se)\bigr)
		\ge(1-t/s)u(x)+(t/s)c.
		\]
		Letting $s\to\infty$ proves \eqref{eq:recession}. If the supremum is not attained, each nonempty superlevel has level strictly below $\sup u$. Were one of them bounded, continuity and compactness would give a maximum on it that is also a global maximum, a contradiction.
	\end{proof}
	
	The conclusion has two different consequences. If a solution attains
	its maximum, a nondecreasing direction must be an invariant
	direction, because its derivative vanishes at the maximum and obeys
	the linearized equation. If the maximum is not attained, the same
	geometric lemma still supplies a monotone direction, which
	 may be
	strictly monotone and so calls for a stability argument. 
	
	Recall that $u$ is a stable solution to \eqref{eq:semilinear} if the following quadratic term is nonnegative for all test functions.
	\begin{equation}\label{eq:quadratic-form}
		Q_u[\eta]=\int_{\R^m}\bigl(|\nabla\eta|^2-f'(u)\eta^2\bigr)\dd x,
		\qquad\eta\in\cc(\R^n).
	\end{equation}
It is a standard fact that if $\psi>0$ solves $-\Delta\psi=f'(u)\psi$, then $u$ is a stable solution. Indeed, testing with $\eta^2/\psi$ gives the ground-state identity
	\begin{equation}\label{eq:groundstate}
		Q_u[\eta]=\int_{\R^m}\psi^2
		\left|\nabla\left(\frac{\eta}{\psi}\right)\right|^2\dd x\ge0.
	\end{equation}
	In particular, a positive directional derivative $u_e$ makes $u$ stable. If $u_e\ge0$ and $f'>0$, then
	\begin{equation}\label{eq:jacobi}
		-\Delta u_e=f'(u)u_e\ge0,
		\qquad u_e\equiv0\quad\text{or}\quad u_e>0
	\end{equation}
	by the strong minimum principle.
	
	The important point is that such a positive Jacobi field gives stability of the solution, and the stability property will be inherited by a
	translation limit in a lower-dimensional space. The next lemma shows
	why such a limit becomes impossible once a dimension-reduction
	argument has made it one-dimensional: the positivity of $f'$ alone
	forces a negative direction for the quadratic form. 
	
	\begin{lemma}\label{lem:unstable}
		Suppose $f\in C^1(\R)$ and $f'>0$. Every entire one-dimensional solution of \eqref{eq:semilinear} is unstable in any ambient dimension.
	\end{lemma}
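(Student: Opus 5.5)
The plan is to build an explicit compactly supported test function $\eta$ with $Q_u[\eta]<0$. Positivity of $f'$ enters only through the potential $V:=f'(u)$, which is continuous and strictly positive. No information about the profile itself is needed beyond continuity, so constant solutions are covered as well.

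\textbf{Setup.} Write $u(x)=h(x\cdot e)$ with $h\in C^2(\R)$ and $-h''=f(h)$. After a rotation, take $e$ to be the first coordinate direction and write $x=(t,y)\in\R\times\R^{m-1}$. Then $V(x)=f'(h(t))=:v(t)$ with $v\in C(\R)$ and $v>0$. In particular
\[
\delta:=\int_{-1}^{1}v(t)\dd t>0 .
\]

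\textbf{Step 1: the one-dimensional direction.} Let $\varphi_R$ be the Lipschitz cutoff equal to $1$ on $[-R,R]$, decaying linearly to $0$ on $R\le|t|\le 2R$, and vanishing elsewhere. Then
\[
\int_\R(\varphi_R')^2\dd t=\frac2R,\qquad \int_\R v\,\varphi_R^2\dd t\ge\delta\quad (R\ge1).
\]
Hence
\[
\int_\R\bigl((\varphi_R')^2-v\varphi_R^2\bigr)\dd t\le \frac2R-\delta<-\frac\delta2
\]
once $R>4/\delta$. Fix such an $R$. This already settles the case $m=1$. Lipschitz test functions are allowed by a standard mollification and density argument, since $Q_u$ is continuous in $H^1$ on functions supported in a fixed compact set.

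\textbf{Step 2: transverse extension.} For $m\ge2$ take $\eta(t,y)=\varphi_R(t)\chi_L(y)$, where $\chi_L(y)=\chi(y/L)$ and $\chi\in\cc(\R^{m-1})$ is a fixed nonzero cutoff. Since $V$ depends only on $t$, Fubini gives
\[
Q_u[\eta]=\Bigl(\int_\R\bigl((\varphi_R')^2-v\varphi_R^2\bigr)\dd t\Bigr)\int\chi_L^2\dd y+\int_\R\varphi_R^2\dd t\int|\nabla\chi_L|^2\dd y .
\]
By scaling, $\int\chi_L^2=c_1L^{m-1}$ and $\int|\nabla\chi_L|^2=c_2L^{m-3}$. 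Therefore
\[
Q_u[\eta]\le -\frac\delta2c_1L^{m-1}+4R\,c_2L^{m-3}<0
\]
for $L$ large. This gives instability in every ambient dimension.

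There is no real obstacle in this argument. The only point needing care is the order of the limits: $R$ must be fixed first, using the positivity of $v$ on a fixed interval, and only then is $L$ sent to $\infty$. Taking the limits in the other order, or scaling isotropically, would instead require controlling $\int v$ over large sets.
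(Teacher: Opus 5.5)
Your proof is correct and follows essentially the same route as the paper. Both arguments use a one-dimensional cutoff whose Dirichlet energy is $O(1/R)$, weighed against the fixed positive mass $\int_{-1}^{1}f'(h)\dd s$, and then a product test function $\zeta(s)\rho(y/L)$ with the one-dimensional factor fixed before $L\to\infty$. The only difference is cosmetic: you use a piecewise-linear cutoff, which needs your density remark, where the paper uses a smooth one.
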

	
	\begin{proof}
		First consider $-h''=f(h)$ on $\R$. Choose $0\le\chi\in\cc(\R)$ with $\chi=1$ on $[-1,1]$, and put $\zeta_R(s)=\chi(s/R)$. Then
		\[
		\int_{\R}|\zeta_R'|^2\dd s=R^{-1}\|\chi'\|_2^2\longrightarrow0,
		\qquad
		\int_{\R}f'(h)\zeta_R^2\dd s\ge\int_{-1}^{1}f'(h)\dd s>0
		\]
		for $R\ge1$. Hence $Q_h[\zeta_R]<0$ for all sufficiently large $R$. No integrability of $f'(h)$ on the whole line is needed.
		
		Fix $\zeta\in\cc(\R)$ with $Q_h[\zeta]<0$. For $m\ge2$, write $x=(s,y)\in\R\times\R^{m-1}$ and test the lifted solution with $\eta_L(s,y)=\zeta(s)\rho(y/L)$, where $0\ne\rho\in\cc(\R^{m-1})$. Direct integration gives
		\begin{equation}\label{eq:instability-product}
			Q_u[\eta_L]
			=L^{m-1}\|\rho\|_2^2Q_h[\zeta]
			+L^{m-3}\|\nabla\rho\|_2^2\|\zeta\|_2^2<0
		\end{equation}
		for large $L$.
	\end{proof}
	
	Instability of one-dimensional profiles will eventually close the
	induction, but it does not guarantee that monotone translates have a
	finite limit. In fact, if a solution grows without bound in a
	monotonicity direction, a fixed superlevel set could contain
	arbitrarily large balls. The following elementary eigenvalue
	comparison excludes precisely this possibility. It is the only
	preliminary result that requires the positive-tail condition
	$\mu_c>0$.
	
	\begin{lemma}\label{lem:ball}
		Let $f$ be continuous and positive, and suppose $\mu_c>0$ for some $c\in\R$, where $\mu_c$ is defined as in \eqref{eq:mu}. If a classical solution of $-\Delta u=f(u)$ satisfies $B_R(x_0)\subset\{u>c\}$, then
		\begin{equation}\label{eq:ball-bound}
			\mu_c\le\lambda_1^D(B_R)
			=R^{-2}\lambda_1^D(B_1),
		\end{equation}
		where $\lambda_1^D$ is the first eigenvalue of the Dirichlet Laplacian. In particular, a fixed superlevel set cannot contain balls of arbitrarily large radius.
	\end{lemma}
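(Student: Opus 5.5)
The plan is to test the equation against the first Dirichlet eigenfunction of the ball and integrate by parts. Let $B=B_R(x_0)$ and let $\phi>0$ in $B$, $\phi=0$ on $\partial B$, be the first Dirichlet eigenfunction: $-\Delta\phi=\lambda_1^D(B_R)\phi$, normalized so that $\int_B\phi\,dx=1$. Put $w=u-c$. Since $\overline{B}$ is compact and $u$ is continuous, shrinking $R$ slightly if necessary (and passing to the limit at the end using continuity of $R\mapsto\lambda_1^D(B_R)=R^{-2}\lambda_1^D(B_1)$), we may assume $w>0$ on $\overline B$. Then the definition \eqref{eq:mu} of $\mu_c$, applied with $t=w(x)>0$, gives the pointwise inequality
\[
-\Delta w=f(c+w)\ge\mu_c\,w\qquad\text{in }B.
\]

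Next we multiply by $\phi$ and use Green's identity on $B$. Because $\phi=0$ on $\partial B$ and $\partial_\nu\phi\le0$ there (Hopf), we get
\[
\int_B(-\Delta w)\phi=\int_B w(-\Delta\phi)-\int_{\partial B}w\,\partial_\nu\phi
\ge\lambda_1^D(B_R)\int_B w\phi ,
\]
and the inequality goes the wrong way for a direct comparison. So instead I would write the identity exactly:
\[
\int_B(-\Delta w)\phi=\lambda_1^D\int_B w\phi+\int_{\partial B}w\,|\partial_\nu\phi|.
\]
This boundary term is the main obstacle. I would remove it by replacing $w$ with $w-\delta$, where $\delta=\min_{\partial B}w>0$. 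Better still, I would use the classical Barta-type argument in place of the eigenfunction test. Set $v=\min(w,\,\cdot)$, or more simply use the Rayleigh quotient via the ground-state identity \eqref{eq:groundstate}. Since $w>0$ on $\overline B$ and $-\Delta w\ge\mu_c w$, the function $w$ is a positive supersolution of $-\Delta-\mu_c$ on $B$. For any $\eta\in C_c^\infty(B)$, testing $-\Delta w\ge \mu_c w$ against $\eta^2/w\ge0$ gives
\[
\int_B|\nabla\eta|^2-\mu_c\int_B\eta^2=\int_B w^2\Bigl|\nabla\frac{\eta}{w}\Bigr|^2+\int_B\Bigl(\frac{-\Delta w}{w}-\mu_c\Bigr)\eta^2\ge0 .
\]
Taking the infimum over $\eta$ of the Rayleigh quotient yields $\lambda_1^D(B_R)\ge\mu_c$. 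Scaling gives $\lambda_1^D(B_R)=R^{-2}\lambda_1^D(B_1)$.

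Finally, \eqref{eq:ball-bound} yields $R\le(\lambda_1^D(B_1)/\mu_c)^{1/2}$. So the superlevel set $\{u>c\}$ contains no ball of larger radius, and since $\{u\ge c'\}\subset\{u>c\}$ for $c'>c$, the same holds for fixed superlevel sets. The only delicate point is the boundary positivity of $w$, which the shrinking/limiting step above handles.
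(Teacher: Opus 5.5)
Your final argument (testing $-\Delta w\ge\mu_c w$ against $\eta^2/w$, completing the square, and taking the Rayleigh infimum) is correct and is exactly the paper's proof; the shrinking step is harmless but unnecessary, since $w>0$ on the support of any $\eta\in\cc(B_R(x_0))$. Incidentally, the eigenfunction test you abandoned also works, because your Green identity has a sign slip: in fact $\int_B(-\Delta w)\phi=\lambda_1^D(B_R)\int_B w\phi+\int_{\partial B}w\,\partial_\nu\phi$, and $w\ge0$, $\partial_\nu\phi\le0$ on $\partial B$ make the boundary term favorable, giving $\mu_c\le\lambda_1^D(B_R)$ directly.
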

	
	\begin{proof}
		In the ball, $z=u-c>0$ satisfies $-\Delta z=f(c+z)\ge\mu_cz$. For $\eta\in\cc(B_R(x_0))$, integration by parts and completion of the square yield
		\[
		\mu_c\int_{B_R}\eta^2
		\le\int_{B_R}\nabla z\cdot\nabla\!\left(\frac{\eta^2}{z}\right)
		=\int_{B_R}|\nabla\eta|^2
		-\int_{B_R}\left|\nabla\eta-\frac{\eta}{z}\nabla z\right|^2
		\le\int_{B_R}|\nabla\eta|^2.
		\]
		Taking the Rayleigh infimum proves \eqref{eq:ball-bound}.
	\end{proof}

	The three ingredients are now in place. Geometric recession
	produces a monotonicity direction; the linearized equation converts
	strict monotonicity into stability, with one-dimensional instability
	as the eventual obstruction; and the spectral estimate prevents a
	monotone limit from diverging.
	
	\section{Rigidity when the supremum is attained}\label{sec:maximum}
	Assume first that a concave solution attains its supremum at some
	point $x_0$. The issue is not the existence of a preferred point,
	but the possibility that its high superlevel sets remain unbounded.
	Such behavior already occurs for the one-dimensional solution
	$\log\sech x_1$ lifted to $\R^m$, where the unbounded directions
	are exactly those along which the function is constant.
	
	The recession lemma together with the strong minimum principle
	identifies and removes these invariant directions, one at a time.
	What remains has a bounded superlevel set and hence decays at least
	linearly in every direction. If the remaining dimension were at
	least two, averaging over spheres would give a strictly decreasing,
	concave radial mean whose derivative nevertheless tends to zero.
	The negative-tail assumption rules out this incompatibility; no
	positive-tail growth condition is needed. We formulate the argument
	for general increasing nonlinearities in the following proposition.
	
	\begin{proposition}\label{prop:max}
		Let $f\in C^1(\R)$ satisfy
		\[
		f>0,\qquad f'>0,\qquad
		\lim_{s\to-\infty}|s|f(s)=0.
		\]
		If a concave entire solution of $-\Delta u=f(u)$ in $\R^m$ attains its supremum, then it is one-dimensional.
	\end{proposition}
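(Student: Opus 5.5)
Let $x_0$ be a maximum point and $M=u(x_0)=\sup u$. The plan is to split off invariant directions with \cref{lem:recession} and \eqref{eq:jacobi}, then rule out effective dimension at least two by a spherical-mean argument that uses the negative-tail hypothesis.

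\emph{Step 1: splitting off invariant directions.} Suppose some superlevel set $\{u\ge c\}$ with $c\le M$ is unbounded. Then \cref{lem:recession} gives $e\in\Sc^{m-1}$ with $u_e\ge0$. By \eqref{eq:jacobi}, either $u_e\equiv0$ or $u_e>0$. Since $\nabla u(x_0)=0$, the second alternative is impossible, so $u$ is invariant in the direction $e$. Writing $\R^m=\R e\oplus e^\perp$, the restriction of $u$ to $e^\perp$ is a concave solution of the same equation in $\R^{m-1}$, and it still attains its maximum $M$ at the projection of $x_0$. Iterating, $u(x)=v(Px)$, where $P$ is the orthogonal projection onto a subspace of dimension $k\le m$ and $v$ is a concave solution on $\R^k$ attaining its maximum. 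Moreover every nonempty superlevel set of $v$ is bounded: if one were unbounded, the step above would produce another invariant direction. If $k=0$, then $u$ is constant, so $f(u)=0$, which contradicts $f>0$. If $k=1$ we are done. It remains to exclude $k\ge2$.

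\emph{Step 2: linear decay.} Assume $k\ge2$ and translate so that $v$ attains its maximum $M$ at $0$. The set $K=\{v\ge M-1\}$ is compact and convex and contains $0$, so $K\subset B_{R_0}$ for some $R_0$. For $|x|\ge R_0$, apply concavity on the segment from $0$ to $x$ at the point $R_0x/|x|$, where $v\le M-1$. This gives
\[
v(x)\le M-\frac{|x|}{R_0}\qquad(|x|\ge R_0).
\]
Also $v\ge M-\|\nabla v\|_{L^\infty(B_r)}\,r$ on $B_r$. For a concave function, $|\nabla v(x)|$ is controlled by the oscillation of $v$ on $B_{|x|+1}$, so $v\ge -C(1+|x|)$ on bounded sets. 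Hence $v$ decays linearly, at worst like $-C|x|^2$. In fact concavity together with finite values on a ball around $0$ gives $v\ge M-C|x|$, by comparing along rays with the supporting hyperplane at points of a sphere.

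\emph{Step 3: the spherical mean.} Let $\bar v(r)$ denote the average of $v$ over $\partial B_r$. This function is concave and nonincreasing in $r$: concavity is preserved under averaging over rotations, and $\bar v$ is maximized at $r=0$. By Step 2, $\bar v(r)\le M-r/R_0$, so $\bar v\to-\infty$ with $\bar v'(r)\le -1/R_0<0$ for large $r$. On the other hand, integrating the equation over the ball gives
\[
-\bar v'(r)\,|\partial B_r|=\int_{B_r}f(v)\dd x.
\]
Because $v\le M-|x|/R_0$ outside $B_{R_0}$ and $|s|f(s)\to0$, we get $f(v(x))=o(1/|x|)$. Therefore
\[
\int_{B_r}f(v)=O(1)+o\!\left(\int_{R_0}^r s^{k-1}s^{-1}\dd s\right)=o(r^{k-1})
\]
when $k\ge2$; the case $k=2$ needs $o(\log r)$ bookkeeping, but $\int_{R_0}^r o(1)\dd s=o(r)$ suffices. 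Dividing by $|\partial B_r|\sim r^{k-1}$ gives $\bar v'(r)\to0$. This contradicts $\bar v'(r)\le-1/R_0$ for large $r$, which follows from $\bar v$ being concave and $\bar v(r)\le M-r/R_0$. Indeed, a concave function whose derivative tends to $0$ must be bounded below by its limit behavior: $\bar v(r)\ge \bar v(r_1)-\varepsilon(r-r_1)$ is impossible to reconcile with the linear upper bound once $\varepsilon<1/R_0$, since $\bar v'$ is nonincreasing and hence $\bar v'\ge\lim\bar v'=0$, which makes $\bar v$ nondecreasing.

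The main obstacle is making Step 3 rigorous: the argument needs the $o(1/|x|)$ decay of $f(v)$ uniformly, which uses only the upper bound on $v$ together with the fact that $f$ is increasing. I would use monotonicity: for large $s$ we have $f(v)\le f(M-|x|/R_0)$, and $|\,M-|x|/R_0\,|\,f(M-|x|/R_0)\to0$ gives exactly $o(1/|x|)$. The lower bound on $v$ from Step 2 is then not needed at all.
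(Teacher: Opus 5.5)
Your proof is correct and follows the paper's argument. You split off invariant directions using the recession lemma and the strong minimum principle at the maximum point, obtain linear decay from concavity along rays, and contradict the flux identity for the spherical mean using the $o(|s|^{-1})$ tail; where the paper combines $|w(y)|f(w(y))\le\varepsilon$ with $|w(y)|\ge |y|/(2R_0)$, you bound $f(v)\le f(M-|x|/R_0)$ via monotonicity, which is an inessential difference. The lower bound $v\ge M-C|x|$ asserted in Step 2 is false for general concave functions, since supporting hyperplanes bound a concave function from above rather than below; as you note it is never used, so delete it, along with the misplaced $o(\log r)$ remark, which only concerns $k=1$.
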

	
	\begin{proof}
		Let $M=u(x_0)=\max_{\R^m}u$, and set $K=\{u\ge M-1\}$. If $K$ is unbounded, \cref{lem:recession} gives a unit vector $e$ with $u_e\ge0$ throughout $\R^m$. At the maximum point, $u_e(x_0)=0$. By \eqref{eq:jacobi}, $u_e\equiv0$, so $u$ is independent of one Euclidean variable.
		
		Repeat in the remaining variables whenever the corresponding superlevel set at height $M-1$ is unbounded. At each step the reduced function solves the same equation, since
		removing an invariant Euclidean direction does not change its
		Laplacian. The process must terminate before dimension zero, because
		$f>0$ excludes a constant solution. Thus, after an orthogonal change of coordinates, $u$ is the lift of a concave solution $w$ in $\R^r$, where $1\le r\le m$, such that
		\begin{equation}\label{eq:reduced-maximum}
			-\Delta w=f(w),\qquad \max_{\R^r}w=M,
			\qquad \{w\ge M-1\}\text{ is bounded}.
		\end{equation}
		
		Suppose $r\ge2$, and translate a maximum point of $w$ to the origin. Choose $R_0>0$ so that $w(R_0\theta)\le M-1$ for every $\theta\in\Sc^{r-1}$. Concavity on each ray implies that, for $R\ge R_0$,
		\begin{equation}\label{eq:linear-decay}
			w(R\theta)
			\le M+\frac{R}{R_0}\bigl(w(R_0\theta)-M\bigr)
			\le M-\frac{R}{R_0}.
		\end{equation}
		In particular, $w$ tends uniformly to $-\infty$ at least linearly as $|y|\to\infty$.
		
		Consider its spherical mean
		\[
		A(R)=\frac{1}{|\Sc^{r-1}|}
		\int_{\Sc^{r-1}}w(R\theta)\dd\theta.
		\]
		Concavity and the divergence theorem imply, respectively,
		\begin{equation}\label{eq:mean-concavity}
			A''(R)=\frac{1}{|\Sc^{r-1}|}
			\int_{\Sc^{r-1}}D^2w(R\theta)[\theta,\theta]\dd\theta\le0
		\end{equation}
		and
		\begin{equation}\label{eq:flux}
			A'(R)=\frac{1}{|\Sc^{r-1}|R^{r-1}}\int_{B_R}\Delta w\dd y
			=-\frac{1}{|\Sc^{r-1}|R^{r-1}}\int_{B_R}f(w)\dd y<0.
		\end{equation}
		
		We now show that $A'(R)\to0$. Given $\varepsilon>0$, the negative-tail assumption and \eqref{eq:linear-decay} give $R_\varepsilon$ such that
		\[
		0<f(w(y))\le\frac{\varepsilon}{|w(y)|}
		\le\frac{2R_0\varepsilon}{|y|}
		\qquad (|y|\ge R_\varepsilon).
		\]
		Consequently, since $r\ge2$,
		\[
		\int_{B_R}f(w)\dd y
		\le C_\varepsilon+
		\frac{2R_0\varepsilon|\Sc^{r-1}|}{r-1}R^{r-1}
		\qquad(R\ge R_\varepsilon).
		\]
		Together with \eqref{eq:flux}, this yields
		\[
		0<-A'(R)\le
		\frac{C_\varepsilon}{|\Sc^{r-1}|R^{r-1}}
		+\frac{2R_0\varepsilon}{r-1}.
		\]
		Now we first let $R\to\infty$, then let $\varepsilon\to0$, we obtain $A'(R)\to0$. However, \eqref{eq:mean-concavity} and \eqref{eq:flux} make $A'$ strictly negative and nonincreasing, so it cannot tend to zero. This reaches a contradiction, and thus $r=1$.
	\end{proof}

	This proves the one-dimensional rigidity statement whenever the maximum of the concave solution is attained.
	The proof also clarifies why that assumption matters: it forces each
	nondecreasing direction to be invariant and allows the spherical-mean
	argument to run after all such directions have been factored out. Without the attainment of the maximum, the same recession direction need not be
	invariant. The strict-monotonicity alternative then requires a
	completely different use of the PDE, which is the subject of the
	next section.
	
	\section{Rigidity when the supremum is not attained}\label{sec:descent}
	Suppose next that the supremum is not attained. Then every nonempty
	superlevel set is unbounded, and the recession lemma provides
	$u_e\ge0$ in some direction. If $u_e\equiv0$, one variable can
	immediately be removed. The new difficulty is the alternative
	$u_e>0$: there is no point at which the maximum principle forces
	invariance, but the positive derivative yields stability through
	the ground-state identity.
	
	We exploit this positivity by translating the solution in the
	increasing direction. The limiting profile is concave and
	independent of that direction, but at first it may be identically
	$+\infty$. Before applying elliptic compactness, one must therefore
	exclude loss of finiteness. Concavity ensures that divergence at a
	single transverse point propagates to the entire transverse space;
	monotonicity then places balls of arbitrarily large radii inside a
	fixed superlevel set, contradicting \cref{lem:ball}. The limit is
	thus finite, solves the same equation in one fewer variable, and
	inherits stability. The next proposition isolates this descent
	step, which is the main replacement for the missing maximum point.
	
	\begin{proposition}\label{prop:descent}
		Suppose $m\ge2$, $f\in C^1(\R)$, $f>0$, $f'>0$, and $\mu_c>0$ for every $c\in\R$. If a concave entire solution $u$ of \eqref{eq:semilinear} satisfies $u_e>0$ in $\R^m$ for some $e\in\Sc^{m-1}$, then the same equation admits a stable concave entire solution in $\R^{m-1}$.
	\end{proposition}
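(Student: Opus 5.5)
We translate the solution along the increasing direction and pass to the limit. Rotate coordinates so that $e=e_m$, and write $x=(y,s)\in\R^{m-1}\times\R$. Set $u_t(y,s)=u(y,s+t)$. Since $u_e>0$, the family $u_t$ is pointwise increasing in $t$, so the limit
\[
v(y,s)=\lim_{t\to\infty}u(y,s+t)\in(-\infty,+\infty]
\]
exists everywhere and is independent of $s$. Write $v=v(y)$. As an increasing limit of concave functions, $v$ is concave as an extended-valued function, in the sense that $v>-\infty$ everywhere.

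The main obstacle is to show that $v$ is finite. Suppose $v(y_0)=+\infty$ for some $y_0$. We first propagate this to every transverse point, arguing as in \cref{lem:recession}. Fix $y\in\R^{m-1}$ and $c\in\R$, and choose $t$ with $u(y_0,t)\ge c$. Concavity of $u$ along the segment from $(y,0)$ to $(2y-y_0... )$ is not directly usable, so we take the simpler route: for $\tau\in(0,1)$ and large $t$,
\[
u\bigl((1-\tau)(y',0)+\tau(y_0,t)\bigr)\ge(1-\tau)u(y',0)+\tau u(y_0,t).
\]
Choosing $y'$ with $(1-\tau)y'+\tau y_0=y$ and letting $t\to\infty$ shows $v(y)=+\infty$. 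Hence $v\equiv+\infty$.

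Next we fix $c$ and $R$. The compact set $\overline{B_R}\times\{0\}\subset\R^m$ has $\min_{\overline B_R}u(\cdot,t)\to\infty$ by Dini's theorem, because the increasing continuous functions $u(\cdot,t)$ converge to $+\infty$ pointwise. Thus for large $t$ the slab $\overline{B_R}\times\{t\}$ lies in $\{u>c\}$. Since $u$ is increasing in $s$, so does $\overline{B_R}\times[t,\infty)$, and this region contains an $m$-dimensional ball of radius $R$. Letting $R\to\infty$ contradicts \cref{lem:ball}, since $\mu_c>0$. Therefore $v$ is finite everywhere, and a finite concave function on $\R^{m-1}$ is continuous.

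To obtain compactness, on each compact set $K\times[-1,1]$ the family $u_t$ is bounded below by $u_0$ and above by $v$, which is locally bounded since it is continuous. Hence $f(u_t)$ is locally bounded uniformly in $t$. Interior $W^{2,p}$ and Schauder estimates, using $f\in C^1$, give uniform $C^{2,\alpha}_{\rm loc}$ bounds on $u_t$. So $u_t\to v$ in $C^2_{\rm loc}$ along a subsequence, and by monotonicity along the whole family. Consequently $v\in C^2$, and $-\Delta_y v=f(v)$ holds in $\R^{m-1}$ because $v$ does not depend on $s$. It is concave.

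For stability, $u_t$ is stable by \eqref{eq:groundstate} with $\psi=\partial_s u_t>0$. Then $Q_{u_t}[\eta]\ge0$ for every $\eta\in\cc(\R^m)$, and passing to the limit via local uniform convergence gives $Q_v^{(m)}[\eta]\ge0$ for the lifted $v$. Testing with $\eta(y,s)=\varphi(y)\chi(s/L)$ and dividing by $L\|\chi\|_2^2$ gives
\[
Q_v[\varphi]+O(L^{-2})\ge0,
\]
in analogy with \eqref{eq:instability-product}. Letting $L\to\infty$ yields $Q_v[\varphi]\ge0$ for all $\varphi\in\cc(\R^{m-1})$. So $v$ is a stable concave entire solution in $\R^{m-1}$.
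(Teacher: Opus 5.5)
Your proposal is correct and follows the paper's proof essentially step by step. The steps are the same: the monotone translation limit, propagation of $+\infty$ to every transverse point by concavity (your convex combination with $\tau=\tfrac12$, $y'=2y-y_0$, is exactly the paper's midpoint argument), the compactness-plus-monotonicity construction of arbitrarily large balls in $\{u>c\}$ contradicting \cref{lem:ball}, elliptic compactness, and the product test function $\varphi(y)\chi(s/L)$ to pass stability down to $\R^{m-1}$. The only fix needed is cosmetic: delete the unfinished sentence about the segment from $(y,0)$, which plays no role in the argument.
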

	
	\begin{proof}
		After a rotation write $x=(t,y)\in\R\times\R^{m-1}$ with $u_t>0$, and define
		\[
		\ell(y)=\lim_{t\to+\infty}u(t,y)\in\R\cup\{+\infty\}.
		\]
		We separate finiteness, compactness, and the passage of stability to the lower-dimensional factor.
		
		\smallskip
		\noindent\emph{Finiteness of the limit.}
		Concavity implies that $\ell$ is either finite everywhere or identically $+\infty$. Indeed, if $\ell(y_0)<\infty$ and $\ell(y_1)=+\infty$, put $y_2=2y_0-y_1$. For $t\ge0$,
		\[
		u(t,y_0)\ge\tfrac12u(t,y_1)+\tfrac12u(t,y_2)
		\ge\tfrac12u(t,y_1)+\tfrac12u(0,y_2),
		\]
		which diverges as $t\to\infty$, a contradiction.
		
		Suppose now that $\ell\equiv+\infty$. Fix $c\in\R$ and $R>0$. For each $y$ in the compact ball $\{|y|\le R\}$ there is a time when $u(t,y)>c$. Continuity in $y$, a finite covering of that ball, and monotonicity in $t$ give a common $T$ with $u(T,y)>c$ for all $|y|\le R$. Hence
		\[
		B_R((T+R+1,0))\subset\{u>c\}.
		\]
		Since $R$ is arbitrary, this contradicts \cref{lem:ball}. Therefore $\ell$ is finite everywhere.
		
		\smallskip
		\noindent\emph{The lower-dimensional equation.}
		The finite concave function $\ell$ is continuous on $\R^{m-1}$. Choose $t_j\uparrow\infty$, and set
		\[
		u_j(s,y)=u(t_j+s,y).
		\]
		The functions $u_j$ increase pointwise to the continuous function
		$\ell(y)$. Dini's theorem gives uniform convergence on compact subsets
		of $\R^m$. To pass to the equation, fix $K\Subset K'\Subset\R^m$
		and choose $p>m$. Both $u_j$ and $f(u_j)$ are uniformly bounded
		on $K'$. Interior $W^{2,p}$ estimates give uniform $C^{1,\alpha}$
		bounds on $K$, for some $\alpha>0$. The local Lipschitz continuity
		of $f$ then makes $f(u_j)$ uniformly $C^{0,\alpha}$ on smaller
		compact sets. Schauder estimates give local uniform $C^{2,\alpha}$
		bounds, after possibly reducing $\alpha$. Consequently
		$u_j\to\ell$ in $C^2_{\mathrm{loc}}$ (since the limit is unique by
		pointwise convergence), and hence
		\begin{equation}\label{eq:limit-equation}
			-\Delta_y\ell=f(\ell),\qquad D_y^2\ell\le0
			\qquad\text{in }\R^{m-1}.
		\end{equation}
		
		\smallskip
		\noindent\emph{Stability in one fewer variable.}
		Each $u_j$ is stable by \eqref{eq:groundstate}, with positive Jacobi field $\partial_su_j$. Passing to the limit in the quadratic form for each compactly supported test function first makes the lifted function $(s,y)\mapsto\ell(y)$ stable in $\R^m$. To obtain stability in $\R^{m-1}$, take
		\[
		\eta_L(s,y)=\chi(s/L)\zeta(y),
		\qquad 0\ne\chi\in\cc(\R),\quad
		\zeta\in\cc(\R^{m-1}).
		\]
		Separation of the integrals gives
		\begin{equation}\label{eq:stable-product}
			0\le L\|\chi\|_2^2Q_\ell[\zeta]
			+L^{-1}\|\chi'\|_2^2\|\zeta\|_2^2.
		\end{equation}
		After division by $L\|\chi\|_2^2$ and passage to $L\to\infty$, we obtain $Q_\ell[\zeta]\ge0$, as required.
	\end{proof}

	We now combine the attained-maximum proposition with the
	stability-preserving descent. This induction does not classify
	arbitrary stable entire solutions: it only reaches stable solutions
	that are also concave, so the lower-dimensional rigidity theorem can
	be applied before instability is invoked.
	\begin{proof}[Proof of \cref{thm:general}]
		We argue by induction on $n$. The assertion is immediate when $n=1$. Assume it has been proved in every lower dimension, and let $u$ be a concave solution in $\R^n$.
		
		If $u$ attains its supremum, \cref{prop:max} applies. Otherwise every nonempty superlevel set is unbounded, and \cref{lem:recession} gives $e\in\Sc^{n-1}$ with $u_e\ge0$. By \eqref{eq:jacobi}, either $u_e\equiv0$ or $u_e>0$ everywhere. In the first case, $u$ is the lift of a concave solution in $\R^{n-1}$, and induction gives one-dimensionality. In the second case, \cref{prop:descent} gives a stable concave solution $\ell$ in $\R^{n-1}$. Induction makes $\ell$ one-dimensional, contradicting \cref{lem:unstable}. This completes the induction.
	\end{proof}
	
	To deduce \cref{thm:Liouville}, we apply \cref{thm:general} to $f(s)=e^{2s}$. The resulting profile $u(x)=h(x\cdot e)$ satisfies $-h''=e^{2h}$. Its first integral is $(h')^2+e^{2h}=\lambda^2$ for some $\lambda>0$.
	With $v=e^{-h}$, this becomes $v''=\lambda^2v$ and
	$\lambda^2v^2-(v')^2=1$. Positivity of $v$ on the whole line gives
	$v(s)=\lambda^{-1}\cosh(\lambda(s-a))$, which is exactly \eqref{eq:form}.

	\subsection{Sharpness of the negative-tail hypothesis}\label{sec:scope}
	The order $o(|s|^{-1})$ in \cref{thm:general} is a genuine borderline for the stated hypotheses: at the inverse-linear order one can have strictly concave, non-one-dimensional solutions, even when all the other assumptions are retained.
	
	\begin{example}\label{ex:critical-tail}
		For each $n\ge2$, set
		\begin{equation}\label{eq:critical-example}
			u(x)=-\sqrt{1+|x|^2},\qquad
			f_n(s)=
			\begin{cases}
				\displaystyle\frac{n-1}{-s}+\frac{1}{(-s)^3},&s\le-1,\\[6pt]
				\displaystyle n\exp\!\left(\frac{n+2}{n}(s+1)\right),&s\ge-1.
			\end{cases}
		\end{equation}
		At $s=-1$ both formulas take the value $n$ and have first derivative $n+2$. Thus $f_n\in C^1(\R)$, with $f_n>0$, $f_n'>0$, and $\mu_c>0$ for all $c\in\R$, the last property following from the positive exponential tail. However,
		\[
		\lim_{s\to-\infty}|s|f_n(s)=n-1>0.
		\]
		Direct differentiation yields
		\[
		D^2u=-\frac{I}{\sqrt{1+|x|^2}}
		+\frac{x\otimes x}{(1+|x|^2)^{3/2}}<0,
		\]
		with radial eigenvalue $-(1+|x|^2)^{-3/2}$ and $n-1$ tangential eigenvalues $-(1+|x|^2)^{-1/2}$. Moreover,
		\[
		-\Delta u=\frac{n-1}{\sqrt{1+|x|^2}}
		+\frac{1}{(1+|x|^2)^{3/2}}=f_n(u).
		\]
		Hence $u$ is a smooth, strictly concave solution which is not one-dimensional. In particular, replacing $f(s)=o(|s|^{-1})$ as $s\to-\infty$ by $f(s)=O(|s|^{-1})$ in \cref{thm:general} would make the conclusion false, even for solutions attaining their maximum.
	\end{example}
	
	\section{A planar rigidity proof via a \texorpdfstring{$P$}{P}-function}\label{sec:planar}
	The preceding argument uses monotonicity of $f$ to create the contradiction with one-dimensional instability. In the plane, a superharmonic energy function implies a different route that requires only \eqref{eq:planar-assumption} and positivity of $f$.
	
	\begin{proof}[Proof of \cref{thm:planar}]
		Choose a primitive $F$ of $f$ bounded below on $\R$, and set
		\[
		P=\frac12|\nabla u|^2+F(u).
		\]
		The identity below is valid in the sense of distributions; the
		stated $C^1$ regularity of $f$, together with interior elliptic
		regularity, justifies all its terms. The equation and the
		Bochner identity give
		\begin{align}
			\Delta P
			&=|D^2u|^2+\nabla u\cdot\nabla\Delta u
			+f(u)\Delta u+f'(u)|\nabla u|^2\notag\\
			&=|D^2u|^2-f(u)^2
			=|D^2u|^2-(\Delta u)^2
			=-2\det D^2u\le0.\label{eq:P-laplacian}
		\end{align}
	The last inequality follows from the concavity assumption $D^2u\le0$ in dimension two. The integrability assumption makes $F$, and hence $P$, bounded
		below. The Liouville property for superharmonic functions bounded
		below on $\R^2$ now shows that $P$ is constant.
		
		Differentiating the constant-energy identity yields
		\begin{equation}\label{eq:Pgrad}
			(D^2u+f(u)I)\nabla u=0.
		\end{equation}
		The open set $\Omega=\{\nabla u\ne0\}$ is nonempty, since $f>0$ excludes a constant solution. On $\Omega$, let $\nu=\nabla u/|\nabla u|$. By \eqref{eq:Pgrad}, $\nu$ is a Hessian eigenvector with eigenvalue $-f(u)$. The trace of the Hessian is also $-f(u)$, so the other eigenvalue is zero. Therefore
		\begin{equation}\label{eq:rankone}
			D^2u=-f(u)\nu\otimes\nu,
			\qquad
			D\nu=\frac{(I-\nu\otimes\nu)D^2u}{|\nabla u|}=0
			\quad\text{on }\Omega.
		\end{equation}
		
		Fix $p\in\Omega$, and let $\Omega_p$ be its connected component. The vector $\nu$ is constant on $\Omega_p$. Choose a unit vector $\tau\perp\nu(p)$, and let $I$ be the component containing zero of $\{s:p+s\tau\in\Omega_p\}$. On $I$, \eqref{eq:rankone} gives
		\[
		\frac{\dd}{\dd s}\nabla u(p+s\tau)=D^2u(p+s\tau)\tau=0.
		\]
		Thus the gradient on this interval is the fixed nonzero vector $\nabla u(p)$. At a finite endpoint it would remain nonzero by continuity, so a neighborhood of that endpoint would lie in $\Omega$ and in the same connected component. This would extend $I$, a contradiction. Hence $I=\R$: through every regular point passes a complete regular level line.
		
		Two such lines cannot be nonparallel. Otherwise they intersect at a point with nonzero gradient perpendicular to both line directions, which is impossible in the plane. All regular level lines are therefore parallel. The gradient is parallel to one fixed vector $e$ on $\Omega$ and is zero outside $\Omega$, so every derivative perpendicular to $e$ vanishes globally. Thus $u(x)=h(x\cdot e)$.
	\end{proof}
	
	The preceding proof uses a Liouville property for bounded-below
	superharmonic functions which is specific to the plane. It is useful
	to explain why this very short argument does not replace the
	all-dimensional dimension-reduction method.
	
	\begin{remark}
		The constancy of a bounded-below superharmonic function is specific to dimensions at most two. In higher dimensions, \eqref{eq:P-laplacian} is replaced by $\Delta P=-2\sigma_2(D^2u)\le0$, but this sign and the lower bound alone do not force constancy. This is why the planar argument does not replace the descent used in \cref{thm:general}.
	\end{remark}

	\section{Quasiconcave Liouville solutions: two routes to classification}\label{sec:quasiconcave}
	
	In this section, we prove the known classification in \cref{thm:quasiconcave} in two ways. The first combines a new maximum-attainment lemma with the earlier bounded-above classification; the second gives an independent PDE proof.
	
	\subsection{Attainment of the maximum}
	
	The main obstacle in applying the bounded-above Liouville classification
	is that quasiconcavity does not itself give a finite upper bound.
	The next lemma eliminates both possibilities of a supremum escaping
	to infinity: it can be neither infinite nor finite but unattained.
	The proof uses only convexity of superlevel sets, the positive
	linearized potential, and the parabolicity of the plane.
	
	\begin{lemma}
		\label{ql:lem:max}
		Every quasiconcave entire solution of the planar Liouville equation has a
		finite, attained maximum.
	\end{lemma}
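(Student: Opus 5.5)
The plan is to argue by contradiction: assume the supremum of $u$ is not attained, whether it is finite or $+\infty$. The idea is to adapt \cref{lem:recession} to the quasiconcave setting, obtain a nondecreasing direction, and then kill it using the linearized equation together with the parabolicity of $\R^2$.

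\emph{Step 1: superlevel sets.} If $\sup u$ is not attained, every nonempty superlevel set $K_c=\{u\ge c\}$ with $c<\sup u$ is unbounded. Otherwise continuity on the compact set $K_c$ would produce a global maximum, exactly as in the last part of \cref{lem:recession}. Each $K_c$ is closed and convex by quasiconcavity, so its recession cone
\[
C_c=\{e:\ x+te\in K_c \text{ for all } x\in K_c,\ t\ge0\}
\]
is a closed convex cone containing a unit vector. This uses the ray construction in the proof of \cref{lem:recession}, together with the standard fact that for closed convex sets the recession directions do not depend on the base point.

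\emph{Step 2: a common direction.} Since $K_c$ decreases as $c$ increases, the cones $C_c$ are nested and decreasing. Therefore the sets $C_c\cap\Sc^1$ form a nested family of nonempty compact sets, indexed by $c<\sup u$. Their intersection contains some $e\in\Sc^1$. For any $x\in\R^2$, take $c=u(x)$. Then $x\in K_c$ and $e\in C_c$, so $u(x+te)\ge u(x)$ for all $t\ge0$, which gives $u_e\ge0$ on $\R^2$. I expect this to be the main point requiring care. Unlike the concave case, quasiconcavity does not transfer a ray of a single superlevel set to all points of the plane. Instead one needs a direction lying in every recession cone simultaneously, and the compactness of the nested family supplies it.

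\emph{Step 3: the linearized equation.} Since $u$ is smooth by elliptic regularity, $u_e$ satisfies
\[
-\Delta u_e=2e^{2u}u_e\ge0.
\]
By the strong minimum principle, as in \eqref{eq:jacobi}, either $u_e\equiv0$ or $u_e>0$.

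\emph{Case $u_e>0$.} Then $u_e$ is a positive superharmonic function on $\R^2$. By the Liouville property for bounded-below superharmonic functions in the plane (parabolicity), $u_e$ is a positive constant. But then $0=-\Delta u_e=2e^{2u}u_e>0$, a contradiction.

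\emph{Case $u_e\equiv0$.} Then $u(x)=h(x\cdot e^\perp)$ with $-h''=e^{2h}$ on $\R$. The first-integral computation at the end of \cref{sec:descent} gives $h(s)=\log(\lambda\sech(\lambda(s-a)))$. This function attains its maximum $\log\lambda$, contradicting the assumption.

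Both cases are impossible, so $\sup u$ is finite and attained. Note that finiteness is not treated as a separate case: it comes out automatically, since an attained supremum is a value of $u$.
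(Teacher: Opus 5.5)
Your proof is correct. Its skeleton matches the paper's: a nondecreasing direction from convexity of superlevel sets, the dichotomy $u_e\equiv0$ or $u_e>0$, and elimination of both alternatives. Three steps are implemented differently.

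\emph{The direction.} The paper avoids recession cones altogether. It takes a maximizing sequence $u(x_j)\to\sup u$ with $x_j/|x_j|\to e$, applies quasiconcavity to the points $z_j=(1-t/|x_j|)x+(t/|x_j|)x_j$, and lets $j\to\infty$. Because $u(x_j)$ eventually exceeds any fixed value $u(x)<\sup u$, this single limiting direction automatically serves every level. That is exactly the issue your nested-cone compactness argument addresses. Your version is valid: the inclusion $C_{c_2}\subseteq C_{c_1}$ for $c_1<c_2$ follows from base-point independence of recession directions in the closed convex set $K_{c_1}$. It simply imports more convex analysis than the paper's one-line limit.

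\emph{The case $u_e>0$.} The paper passes through the ground-state identity to the stability inequality $\int 2e^{2u}\varphi^2\le\int|\nabla\varphi|^2$ and then tests with a logarithmic cutoff. You instead apply the Liouville theorem for positive superharmonic functions on the plane directly to $u_e$. Both express the parabolicity of $\R^2$; the logarithmic cutoff is essentially the proof of that Liouville theorem. Your route is shorter and even makes the strong minimum principle unnecessary: $u_e\ge0$ is superharmonic on $\R^2$, hence constant, and the equation forces the constant to vanish. The paper's phrasing keeps the argument inside the stability framework it uses throughout.

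\emph{The case $u_e\equiv0$.} You use the explicit first integral, which is specific to $e^{2h}$. The paper uses a qualitative argument: $h''<0$, so $h'$ is strictly decreasing and must change sign. That argument is what lets the lemma extend to $-\Delta u=f(u)$ with $f>0$, $f'>0$, as noted in \cref{ql:rem:extension}. Your superharmonic argument for the case $u_e>0$ extends to that setting equally well.
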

	
	\begin{proof}
		Suppose that $M:=\sup_{\R^2}u$ is either $+\infty$ or finite but not
		attained. Choose $x_j$ with $u(x_j)\to M$. Necessarily $|x_j|\to\infty$;
		passing to a subsequence, assume that $x_j/|x_j|\to e\in S^1$.
		
		Fix $x\in\R^2$ and $t>0$. For all large $j$, $u(x_j)>u(x)$ and
		$0<t/|x_j|<1$. Set
		\begin{equation}\label{ql:eq:convex-point}
			z_j=\left(1-\frac{t}{|x_j|}\right)x
			+\frac{t}{|x_j|}x_j
			=x+t\frac{x_j}{|x_j|}-\frac{t}{|x_j|}x.
		\end{equation}
		Quasiconcavity gives
		\[
		u(z_j)\ge\min\{u(x),u(x_j)\}=u(x).
		\]
		Since $z_j\to x+te$, continuity yields $u(x+te)\ge u(x)$ for every
		$x$ and $t>0$. Hence $w:=u_e\ge0$ in $\R^2$. By elliptic regularity,
		\begin{equation}\label{ql:eq:Jacobi}
			-\Delta w=2e^{2u}w.
		\end{equation}
		The strong minimum principle, applied to the weak linearized
		equation with locally bounded nonnegative potential, shows that
		either $w\equiv0$ or $w>0$.
		
		If $w>0$, the ground-state transform applied to \eqref{ql:eq:Jacobi}
		provides
		\begin{equation}\label{ql:eq:stability}
			\int_{\R^2}2e^{2u}\varphi^2\,dx
			\le\int_{\R^2}|\nabla\varphi|^2\,dx,
			\qquad \varphi\in C_c^\infty(\R^2).
		\end{equation}
		For $R>2$, take the logarithmic cutoff equal to $1$ on $B_R$,
		to $\log(R^2/|x|)/\log R$ on $B_{R^2}\setminus B_R$, and to zero outside
		$B_{R^2}$. It is admissible by $H^1$ approximation and has Dirichlet
		energy $2\pi/\log R$. Letting $R\to\infty$ in
		\eqref{ql:eq:stability} contradicts
		$\int_{B_1}e^{2u}\,dx>0$.
		
		Thus $w\equiv0$, and $u(x)=h(x\cdot e^\perp)$, where
		$-h''=e^{2h}$ on $\R$. Since $h''<0$, $h'$ is strictly decreasing.
		If $h'$ were positive on all of $\R$, then $h(s)\ge h(0)$ for $s\ge0$,
		so $h''(s)\le-e^{2h(0)}$, a contradiction for large $s$.
		Reflection rules out $h'<0$ on all of $\R$. Hence $h'$ vanishes once,
		and $h$ attains a finite maximum, contradicting the assumption on $M$.
	\end{proof}
	
	\subsection{A short deduction from the bounded-above classification}
	
	Once the maximum is known to exist, the result of
	Eremenko--Gui--Li--Xu leaves only the spherical family and the
	one-parameter nonradial family. Quasiconcavity rules out every
	nontrivial member of the latter family by inspecting its set of
	maximizers. We record this short deduction before turning to the
	independent PDE proof.
	
	\begin{proof}[First proof of \cref{thm:quasiconcave} (using \cite{EGLX})]
		By \cref{ql:lem:max}, $u$ is bounded from above. The classification of
		Eremenko--Gui--Li--Xu \cite[Theorem~1.6]{EGLX} states that, up to the
		natural Euclidean similarities and the scaling of the equation, such a
		solution is either spherical or belongs to
		\begin{equation}\label{ql:eq:family}
			u_b(s,t)=\log\frac{2e^s}{1+b^2+2be^s\cos t+e^{2s}},
			\qquad b\ge0.
		\end{equation}
		Set $A=\sqrt{1+b^2}$ and $s_0=\log A$. Then
		\begin{equation}\label{ql:eq:family-v}
			e^{-u_b(s,t)}=A\cosh(s-s_0)+b\cos t.
		\end{equation}
		If $b>0$, the points $(s_0,-\pi)$ and $(s_0,\pi)$ maximize $u_b$,
		whereas their midpoint $(s_0,0)$ does not: indeed, the values of
		$e^{-u_b}$ are respectively $A-b$ and $A+b$.
		This contradicts convexity of the maximum set of $u_b$. Therefore
		$b=0$, and \eqref{ql:eq:family} reduces to $u_0(s,t)=\log\sech s$.
		This proves the classification, and the converse follows because the
		superlevel sets in \eqref{ql:eq:spherical} and \eqref{ql:eq:oned} are
		respectively disks and strips (or empty sets).
	\end{proof}
	
	\subsection{An independent PDE proof}
	
	The preceding proof uses the known classification of bounded-above
	Liouville solutions. We now obtain the same conclusion directly from the equation.
	The additional ingredient is a gradient estimate derived from the
	geometry of convex superlevel sets.
	
	\begin{lemma}\label{ql:lem:gradient}
		Under the assumptions of \cref{thm:quasiconcave}, let
		$M=\max_{\R^2}u$. Then
		\begin{equation}\label{ql:eq:gradient}
			|\nabla u|^2+e^{2u}\le e^{2M}\qquad\text{in }\R^2.
		\end{equation}
	\end{lemma}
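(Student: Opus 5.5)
The plan is to show that the quantity
\[
P:=|\nabla u|^2+e^{2u}
\]
is nondecreasing along gradient ascent curves of $u$. We then argue that if $P$ exceeded $e^{2M}$ at some point, the ascent curve from that point would force $u$ to grow without bound, contradicting $u\le M$. Here $M$ is finite and attained by \cref{ql:lem:max}. At critical points the inequality \eqref{ql:eq:gradient} is trivial, since there $P=e^{2u}\le e^{2M}$. So it suffices to treat regular points.

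\emph{Step 1: tangential concavity at regular points.} Let $p$ be a point with $\nabla u(p)\ne0$ and set $c=u(p)$. Put $\nu=\nabla u/|\nabla u|$, and let $\tau\perp\nu$ be a unit vector. The set $\{u>c\}$ is open and convex; it is the union of the convex sets $\{u\ge c'\}$ for $c'>c$. The point $p$ lies on its boundary, and the tangent line $p+\R\tau$ is a supporting line, so $u\le c$ on that line. Since $s\mapsto u(p+s\tau)$ has a maximum at $s=0$, we get $u_{\tau\tau}(p)=D^2u(p)[\tau,\tau]\le0$. In the plane $\Delta u=u_{\nu\nu}+u_{\tau\tau}$, so the equation gives
\[
u_{\nu\nu}=-e^{2u}-u_{\tau\tau}\ge -e^{2u}\qquad\text{on }\{\nabla u\ne0\}.
\]

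\emph{Step 2: monotonicity along ascent curves.} Let $\gamma$ solve $\gamma'(s)=\nu(\gamma(s))$ with $\gamma(0)=p$; this is well defined while $\gamma$ stays in $\{\nabla u\ne0\}$. Along $\gamma$ we have $\frac{\dd}{\dd s}u=|\nabla u|$ and $\frac{\dd}{\dd s}|\nabla u|=D^2u[\nu,\nu]=u_{\nu\nu}$. Hence
\[
\frac{\dd}{\dd s}P(\gamma(s))=2|\nabla u|\bigl(u_{\nu\nu}+e^{2u}\bigr)=-2|\nabla u|\,u_{\tau\tau}\ge0 .
\]

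\emph{Step 3: contradiction.} Suppose $P(p)>e^{2M}$, and set $\delta^2:=P(p)-e^{2M}>0$. As long as $\gamma$ exists, Step 2 gives
\[
|\nabla u|^2\ge P(p)-e^{2u}\ge\delta^2
\]
along $\gamma$. Thus $\gamma$ stays in the closed set $\{|\nabla u|\ge\delta\}$, on which $\nu$ is smooth and bounded. Consequently $\gamma$ cannot terminate in finite arclength. A maximal solution can only stop if it leaves every compact set, and a unit-speed curve cannot do that in finite time, or if it approaches $\{\nabla u=0\}$, which is excluded. So $\gamma$ is defined for all $s\ge0$, and
\[
u(\gamma(s))\ge u(p)+\delta s\to\infty,
\]
contradicting $u\le M$. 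Hence $P\le e^{2M}$ everywhere.

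The only delicate point is the global existence of the ascent curve in Step 3. The lower bound $|\nabla u|\ge\delta$ must be propagated together with existence, through a continuity argument on the maximal existence interval. I expect this to be routine once Steps 1 and 2 are in place. The sign $u_{\tau\tau}\le0$ in Step 1 is where quasiconcavity enters, and it uses only the supporting-line property of convex superlevel sets. Smoothness of $u$ follows from elliptic regularity, which justifies the third derivatives implicit in Step 2.
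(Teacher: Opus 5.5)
Your proposal is correct and follows the paper's proof. It uses the same quantity $P=|\nabla u|^2+e^{2u}$, the same supporting-line argument giving $u_{\tau\tau}\le0$, and the same monotonicity of $P$ along the normalized gradient-ascent curve. The only difference is the endgame: you assume $P(p)>e^{2M}$ and use the resulting bound $|\nabla u|\ge\delta$ to obtain global existence and $u(\gamma(s))\to\infty$, whereas the paper splits into a finite-length case (the curve ends at a critical point) and an infinite-length case (integrability of $|\nabla u|$ along $\gamma$ gives $|\nabla u(\gamma(s_j))|\to0$). Your bound holds on the whole maximal interval directly, so no extra continuity argument is needed, and both versions are valid.
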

	
	\begin{proof}
		Put $P=|\nabla u|^2+e^{2u}$. At a regular point, let
		$g=|\nabla u|$, $\nu=\nabla u/g$, and let $\tau\perp\nu$ be a unit
		vector. The tangent line to the regular level set through this point
		supports the convex superlevel set. Consequently,
		$D^2u[\tau,\tau]\le0$. Since
		$u_{\nu\nu}+u_{\tau\tau}=-e^{2u}$, we obtain
		\begin{equation}\label{ql:eq:P-monotonicity}
			\nabla P\cdot\nu
			=2g\bigl(u_{\nu\nu}+e^{2u}\bigr)
			=-2g\,u_{\tau\tau}\ge0.
		\end{equation}
		
		Starting from a regular point $x_0$, let $\gamma:[0,L)\to\R^2$ be a
		maximal forward trajectory of
		\[
		\gamma'(s)=\frac{\nabla u(\gamma(s))}{|\nabla u(\gamma(s))|},
		\qquad \gamma(0)=x_0.
		\]
		Along $\gamma$, the function $P$ is nondecreasing and
		$\frac{d}{ds}u(\gamma(s))=|\nabla u(\gamma(s))|$.
		If $L<\infty$, the unit-speed curve converges to a critical point
		$x_*$ (otherwise it can be extended), and hence
		$P(x_0)\le P(x_*)=e^{2u(x_*)}\le e^{2M}$.
		If $L=\infty$, then
		\[
		\int_0^\infty|\nabla u(\gamma(s))|\,ds
		\le M-u(x_0)<\infty.
		\]
		There are $s_j\to\infty$ with $|\nabla u(\gamma(s_j))|\to0$.
		Therefore $P(x_0)\le\liminf_jP(\gamma(s_j))\le e^{2M}$.
		At a critical point, the desired bound holds because $u\le M$.
	\end{proof}
	
	\begin{proof}[Second proof of \cref{thm:quasiconcave} (independent PDE argument)]
		We retain \cref{ql:lem:max,ql:lem:gradient} but do not use
		\cite[Theorem~1.6]{EGLX}. Define the symmetric tensor
		\begin{equation}\label{ql:eq:stress}
			T=D^2u-\nabla u\otimes\nabla u
			+\frac12\bigl(|\nabla u|^2+e^{2u}\bigr)I_2.
		\end{equation}
		The equation $\Delta u=-e^{2u}$ gives
		\begin{equation}\label{ql:eq:stress-conservation}
			\operatorname{tr}T=0,\qquad \operatorname{div}T=0.
		\end{equation}
		Indeed, in index notation,
		\[
		\partial_jT_{ij}
		=\partial_i\Delta u-(\Delta u)u_i+e^{2u}u_i
		=\bigl(-2e^{2u}+e^{2u}+e^{2u}\bigr)u_i=0.
		\]
		By \eqref{ql:eq:gradient}, both $\nabla u$ and $e^{2u}$ are bounded.
		Also $-\Delta u_i=2e^{2u}u_i$ has globally bounded right-hand side.
		Uniform interior $W^{2,p}$ estimates for $u_i$ on balls of fixed
		radius, with $p>2$, show that $D^2u$ is globally bounded. Thus $T$ is
		bounded.
		
		Write
		\[
		T=\begin{pmatrix}a&b\\ b&-a\end{pmatrix}.
		\]
		The divergence-free identity becomes
		$a_x+b_y=0$, $b_x-a_y=0$, and implies
		$\Delta a=\Delta b=0$. Both components are bounded harmonic
		functions on $\R^2$, hence constant. Therefore $T$ is a constant
		trace-free matrix.
		
		Set $v=e^{-u}>0$. Direct calculation yields
		\begin{equation}\label{ql:eq:v-equation}
			v\Delta v-|\nabla v|^2=1,
		\end{equation}
		and
		\begin{equation}\label{ql:eq:T-v}
			T_{11}-T_{22}=-\frac{v_{xx}-v_{yy}}{v},
			\qquad T_{12}=-\frac{v_{xy}}{v}.
		\end{equation}
		
		\emph{Case 1: $T=0$.}
		Then $v_{xy}=0$ and $v_{xx}=v_{yy}$, so
		\[
		v(x,y)=A(x^2+y^2)+Bx+Cy+D.
		\]
		Equation \eqref{ql:eq:v-equation} implies
		$4AD-B^2-C^2=1$. Since $v>0$ on $\R^2$, $A>0$, and
		completing the square gives
		\[
		v(x)=A|x-x_0|^2+\frac1{4A}.
		\]
		Taking $\lambda=2A$ recovers \eqref{ql:eq:spherical}.
		
		\emph{Case 2: $T\ne0$.}
		After a rotation, write
		$T=\frac{\lambda^2}{2}\operatorname{diag}(1,-1)$, $\lambda>0$.
		By \eqref{ql:eq:T-v},
		\[
		v_{xy}=0,\qquad v_{xx}-v_{yy}=-\lambda^2v.
		\]
		From the first equation, we can write $v(x,y)=F(x)+G(y)$, and then further computation yields
		\[
		v(x,y)=\alpha\cos(\lambda x)+\beta\sin(\lambda x)
		+c e^{\lambda y}+d e^{-\lambda y}.
		\]
		Substitution in \eqref{ql:eq:v-equation} yields
		\begin{equation}\label{ql:eq:constraint}
			\lambda^2(4cd-\alpha^2-\beta^2)=1.
		\end{equation}
		In particular $cd>0$; positivity of $v$ forces $c,d>0$.
		Putting $\rho=(\alpha^2+\beta^2)^{1/2}$, $C=2\sqrt{cd}$, and
		translating the coordinates, we obtain
		\begin{equation}\label{ql:eq:normalform}
			v(x,y)=C\cosh\bigl(\lambda(y-y_0)\bigr)
			+\rho\cos\bigl(\lambda(x-x_0)\bigr),
			\qquad \lambda^2(C^2-\rho^2)=1.
		\end{equation}
		Suppose that $\rho>0$, then the minimum of $v$ is attained at a discrete
		sequence of points on the line $y=y_0$. Since the midpoint of two consecutive
		minimizers is not a minimizer, this contradicts the convexity of the set $\{v=\min v\}$. Therefore, $\rho=0$, and thus $C=1/\lambda$ and
		$v=\lambda^{-1}\cosh(\lambda(y-y_0))$, which is
		\eqref{ql:eq:oned}. This completes the proof.
	\end{proof}
	
	The two parts of the PDE argument have rather different scopes:
	maximum attainment and the gradient estimate survive for much more
	general nonlinearities, whereas the divergence-free tensor uses the
	exact exponential form. The following observation makes the
	separation explicit.
	
	\begin{remark}\label{ql:rem:extension}
		The maximum-attainment lemma extends to
		$-\Delta u=f(u)$ in $\R^2$ whenever $f\in C^1(\R)$,
		$f>0$ and $f'>0$: the same positive-Jacobi-field argument applies,
		and a one-dimensional entire solution of $-h''=f(h)$ necessarily
		attains its maximum. More generally, for any positive $f$ and any
		bounded-above quasiconcave solution, the gradient-flow calculation
		with $F'=f$ gives
		\[
		|\nabla u|^2+2F(u)\le 2F(\sup_{\R^2}u).
		\]
		When $F$ is bounded below on $(-\infty,\sup u]$, this also bounds
		$|\nabla u|$. The special role of the Liouville nonlinearity arises
		in \eqref{ql:eq:stress-conservation}: with
		\[
		T_f=D^2u-\nabla u\otimes\nabla u
		+\tfrac12\bigl(|\nabla u|^2+f(u)\bigr)I_2,
		\]
		one has
		\[
		\operatorname{div}T_f
		=\bigl(f(u)-\tfrac12f'(u)\bigr)\nabla u.
		\]
		Thus this particular tensor is divergence-free for $f(s)=Ce^{2s}$,
		but not for a general nonlinearity. The second proof establishes only
		the classification part of \cite{BEL}, not its additional quantitative
		statements concerning midpoint defects.
	\end{remark}
	
	\section*{Acknowledgment}
	C. Gui is partially supported by NSF grant DMS-2155183, Univer
	sity of Macau research grants CPG2023-00037-FST, CPG2024-00016-FST,
	MYRG-GRG2023-00139-FST-UMDF, SRG2023-00011-FST, a UMDF Pro
	fessorial Fellowship and Macao SAR FDCT 0003/2023/RIA1. Q. Li is sup
	ported by National Key R\&D Program of China (2022YFA1006900), the
	National Natural Science Fund of China for Youth Scholars of B class (No. 12522109) and National Science Fund of China General Program (No. 12471105). C. Wang is  supported by National Key R\&D Program of China (2022YFA1006900) and National Science Fund of China General Program (No. 12471106).  L. Xu is supported by NSFC No. 12171143.

	\section*{Declaration of generative AI and AI-assisted technologies}
	
	In the course of this work, the authors used ChatGPT (OpenAI) to assist with working out and refining technical details of the proofs, as well as with language editing and LaTeX typesetting. The research questions, proof frameworks, and main mathematical ideas originated with the authors. AI assistance contributed to the implementation of these ideas through interactive discussions with the authors. All AI-assisted arguments were critically examined and verified by the authors, who take full responsibility for the mathematical correctness, originality, references, and final content of the article.


\begin{thebibliography}{99}
		\bibitem{AAC}
		G.~Alberti, L.~Ambrosio and X.~Cabr\'e,
		\href{https://doi.org/10.1023/A:1010602715526}{\emph{On a long-standing conjecture of E.~De Giorgi: symmetry in 3D for general nonlinearities and a local minimality property}},
		Acta Appl. Math. \textbf{65} (2001), no.~1--3, 9--33.
		\bibitem{AC}
		L.~Ambrosio and X.~Cabr\'e,
		\href{https://doi.org/10.1090/S0894-0347-00-00345-3}{\emph{Entire solutions of semilinear elliptic equations in $\R^3$ and a conjecture of De Giorgi}},
		J. Amer. Math. Soc. \textbf{13} (2000), no.~4, 725--739.
		\bibitem{BEL}
		W.~Bergweiler, A.~Eremenko and J.~K.~Langley,
		\href{https://doi.org/10.15407/mag19.01.059}{\emph{On conformal metrics of constant positive curvature in the plane}},
		J. Math. Phys. Anal. Geom. \textbf{19} (2023), no.~1, 59--73.
		\bibitem{BG}
		B.~Bian and P.~Guan,
		\href{https://doi.org/10.1007/s00222-009-0179-5}{\emph{A microscopic convexity principle for nonlinear partial differential equations}},
		Invent. Math. \textbf{177} (2009), no.~2, 307--335.
		\bibitem{CF}
		L.~A.~Caffarelli and A.~Friedman,
		\href{https://doi.org/10.1215/S0012-7094-85-05221-4}{\emph{Convexity of solutions of semilinear elliptic equations}},
		Duke Math. J. \textbf{52} (1985), no.~2, 431--456.
		\bibitem{CaiLaiSurface}
		X.~Cai and M.~Lai,
		\href{https://doi.org/10.2140/pjm.2024.332.23}{\emph{Liouville equations on complete surfaces with nonnegative Gauss curvature}},
		Pacific J. Math. \textbf{332} (2024), no.~1, 23--38.
		\bibitem{CaiLaiSurvey}
		X.~Cai and M.~Lai,
		\href{https://doi.org/10.4208/jms.v57n3.24.10}{\emph{A survey of classification results for solutions to the Liouville equation}},
		J. Math. Study \textbf{57} (2024), no.~3, 398--424.
		\bibitem{CLZ}
		X.~Cai, M.~Lai and C.~Zhang,
		\href{https://doi.org/10.4310/MRL.260615144842}{\emph{An area growth estimate of the Liouville equation}},
		Math. Res. Lett. \textbf{33} (2026), no.~2, 567--595.
		\bibitem{CatinoMonticelli}
		G.~Catino and D.~D.~Monticelli,
		\href{https://doi.org/10.4171/JEMS/1484}{\emph{Semilinear elliptic equations on manifolds with nonnegative Ricci curvature}},
		J. Eur. Math. Soc. \textbf{28} (2026), no.~1, 359--392.
		\bibitem{CFFFS}
		H.~Chan, X.~Fern\'andez-Real, A.~Figalli, E.~Florit-Simon and J.~Serra,
		\emph{Stable solutions of the Allen--Cahn equation in dimension three are one-dimensional},
		preprint (2026), \href{https://arxiv.org/abs/2609.30194}{arXiv:2609.30194}.
		\bibitem{CL}
		W.~Chen and C.~Li,
		\href{https://doi.org/10.1215/S0012-7094-91-06325-8}{\emph{Classification of solutions of some nonlinear elliptic equations}},
		Duke Math. J. \textbf{63} (1991), no.~3, 615--622.
		\bibitem{CW}
		K.~S.~Chou and T.~Y.~H.~Wan,
		\href{https://doi.org/10.2140/pjm.1994.163.269}{\emph{Asymptotic radial symmetry for solutions of $\Delta u+e^u=0$ in a punctured disc}},
		Pacific J. Math. \textbf{163} (1994), no.~2, 269--276;
		\href{https://doi.org/10.2140/pjm.1995.171.589}{correction},
		\textbf{171} (1995), no.~2, 589--590.
		\bibitem{CFG}
		G.~Ciraolo, A.~Farina and M.~Gatti,
		\emph{The Liouville equation on Riemannian surfaces: the role of volume growth in classification and rigidity results},
		preprint (2026), \href{https://arxiv.org/abs/2607.19838}{arXiv:2607.19838}.
		\bibitem{CFP}
		G.~Ciraolo, A.~Farina and C.~C.~Polvara,
		\href{https://doi.org/10.4171/JEMS/1729}{\emph{Classification results, rigidity theorems and semilinear PDEs on Riemannian manifolds: A $P$-function approach}},
		J. Eur. Math. Soc. (2025), published online first.
		\bibitem{Dancer2008}
		E.~N.~Dancer,
		\href{https://doi.org/10.1016/j.anihpc.2006.12.001}{\emph{Finite Morse index solutions of exponential problems}},
		Ann. Inst. H. Poincar\'e Anal. Non Lin\'eaire \textbf{25} (2008), no.~1, 173--179.
		\bibitem{DF}
		E.~N.~Dancer and A.~Farina,
		\href{https://doi.org/10.1090/S0002-9939-08-09772-4}{\emph{On the classification of solutions of $-\Delta u=e^u$ on $\R^N$: stability outside a compact set and applications}},
		Proc. Amer. Math. Soc. \textbf{137} (2009), no.~4, 1333--1338.
		\bibitem{DG}
		E.~De Giorgi,
		\emph{Convergence problems for functionals and operators},
		in \emph{Proceedings of the International Meeting on Recent Methods in Nonlinear Analysis (Rome, 1978)},
		Pitagora, Bologna, 1979, 131--188.
		\bibitem{DPKW}
		M.~del Pino, M.~Kowalczyk and J.~Wei,
		\href{https://doi.org/10.4007/annals.2011.174.3.3}{\emph{On De Giorgi's conjecture in dimension $N\ge9$}},
		Ann. of Math. (2) \textbf{174} (2011), no.~3, 1485--1569.
		\bibitem{Emden}
		R.~Emden,
		\emph{Gaskugeln: Anwendungen der mechanischen W\"armetheorie auf kosmologische und meteorologische Probleme},
		B.~G.~Teubner, Leipzig, 1907.
		\bibitem{EGLX}
		A.~Eremenko, C.~Gui, Q.~Li and L.~Xu,
		\href{https://doi.org/10.4310/CAG.250221033911}{\emph{Rigidity results on Liouville equation}},
		Comm. Anal. Geom. \textbf{33} (2025), no.~1, 163--184.
		\bibitem{Farina}
		A.~Farina,
		\href{https://doi.org/10.1016/j.crma.2007.05.021}{\emph{Stable solutions of $-\Delta u=e^u$ on $\R^N$}},
		C. R. Math. Acad. Sci. Paris \textbf{345} (2007), no.~2, 63--66.
		\bibitem{GG}
		N.~Ghoussoub and C.~Gui,
		\href{https://doi.org/10.1007/s002080050196}{\emph{On a conjecture of De Giorgi and some related problems}},
		Math. Ann. \textbf{311} (1998), no.~3, 481--491.
		\bibitem{GL}
		C.~Gui and Q.~Li,
		\href{https://doi.org/10.1007/s00209-023-03369-5}{\emph{Some geometric inequalities related to Liouville equation}},
		Math. Z. \textbf{305} (2023), Paper No.~40.
		\bibitem{GLXX}
		C.~Gui, Q.~Li, C.~Xie and H.~Xu,
		\emph{Pressure estimates and classifications for steady $H^1_{\mathrm{loc}}$-solutions to two-dimensional incompressible Euler equations},
		preprint (2026), \href{https://arxiv.org/abs/2609.32647}{arXiv:2609.32647}.
		\bibitem{GXX}
		C.~Gui, C.~Xie and H.~Xu,
		\href{https://doi.org/10.1007/s00220-026-05632-2}{\emph{On a classification of steady solutions to two-dimensional Euler equations}},
		Comm. Math. Phys. \textbf{407} (2026), Paper No.~112.
		\bibitem{Hunter}
		C.~Hunter,
		\href{https://doi.org/10.1046/j.1365-8711.2001.04914.x}{\emph{Series solutions for polytropes and the isothermal sphere}},
		Mon. Not. R. Astron. Soc. \textbf{328} (2001), no.~3, 839--847.
		\bibitem{JL}
		D.~D.~Joseph and T.~S.~Lundgren,
		\href{https://doi.org/10.1007/BF00250508}{\emph{Quasilinear Dirichlet problems driven by positive sources}},
		Arch. Rational Mech. Anal. \textbf{49} (1973), 241--269.
		\bibitem{KL}
		N.~J.~Korevaar and J.~L.~Lewis,
		\href{https://doi.org/10.1007/BF00279844}{\emph{Convex solutions of certain elliptic equations have constant rank Hessians}},
		Arch. Rational Mech. Anal. \textbf{97} (1987), no.~1, 19--32.
		\bibitem{LX}
		Q.~Li and L.~Xu,
		\href{https://doi.org/10.1016/j.jfa.2025.110935}{\emph{Two new constant rank theorems}},
		J. Funct. Anal. \textbf{289} (2025), no.~4, Paper No.~110935, 18 pp.
		\bibitem{Liouville}
		J.~Liouville,
		\href{https://www.numdam.org/item/JMPA_1853_1_18__71_0/}{\emph{Sur l'\'equation aux diff\'erences partielles} $\frac{d^2\log\lambda}{du\,d\nu}\pm\frac{\lambda}{2a^2}=0$},
		J. Math. Pures Appl. (1) \textbf{18} (1853), 71--72.
		\bibitem{LLWWWW}
		Y.~Liu, T.~Luo, K.~Wang, J.~Wei, Y.~Wei and K.~Wu,
		\emph{Stable De Giorgi conjecture of the Allen--Cahn equation in $\R^3$},
		preprint (2026), \href{https://arxiv.org/abs/2609.21680}{arXiv:2609.21680}.
		\bibitem{Modica}
		L.~Modica,
		\href{https://doi.org/10.1002/cpa.3160380515}{\emph{A gradient bound and a Liouville theorem for nonlinear Poisson equations}},
		Comm. Pure Appl. Math. \textbf{38} (1985), no.~5, 679--684.
		\bibitem{Ou}
		Q.~Ou,
		\href{https://doi.org/10.1515/ans-2023-0220}{\emph{Classification results of Liouville equations and rigidity of Riemannian surfaces}},
		Adv. Nonlinear Stud. \textbf{26} (2026), no.~2, 670--676.
		\bibitem{PaynePhilippin}
		L.~E.~Payne and G.~A.~Philippin,
		\href{https://doi.org/10.1016/0022-0396(80)90086-8}{\emph{On maximum principles for a class of nonlinear second-order elliptic equations}},
		J. Differential Equations \textbf{37} (1980), no.~1, 39--48.
		\bibitem{Savin}
		O.~Savin,
		\href{https://doi.org/10.4007/annals.2009.169.41}{\emph{Regularity of flat level sets in phase transitions}},
		Ann. of Math. (2) \textbf{169} (2009), no.~1, 41--78.
		\bibitem{Weinberger}
		H.~F.~Weinberger,
		\href{https://doi.org/10.1007/BF00250469}{\emph{Remark on the preceding paper of Serrin}},
		Arch. Rational Mech. Anal. \textbf{43} (1971), no.~4, 319--320.
	\end{thebibliography}
\end{document}